%June 8, 2018
\documentclass[a4paper,11pt,reqno]{amsart}
\RequirePackage[table]	{xcolor}			
\definecolor{green1}{RGB}{0,107,28}
\definecolor{green2}{RGB}{17,85,35}
\definecolor{green3}{RGB}{0,77,20}
\definecolor{green4}{RGB}{33,166,68}
\definecolor{green5}{RGB}{60,166,88}

\definecolor{blue1}{RGB}{3,56,91}
\definecolor{blue2}{RGB}{16,51,73}
\definecolor{blue3}{RGB}{1,40,66}
\definecolor{blue4}{RGB}{35,109,157}
\definecolor{blue5}{RGB}{59,118,157}
\RequirePackage[
	final,							
	colorlinks=true,
	urlcolor=blue3,						
	linkcolor=blue3,					
	citecolor=green4,					
	hypertexnames=false,					
	hyperfootnotes=false,					
	]{hyperref}						

\usepackage{a4wide}
\usepackage{latexsym}
\usepackage{amsopn,amscd}
\usepackage{amsfonts}
\usepackage{amssymb}
\usepackage{amsthm}
\usepackage{amsmath}
\usepackage{todonotes}

\usepackage[all]{xy}
\usepackage{scalefnt}

\usepackage{hyperref}
\usepackage{enumitem}
\setitemize{leftmargin=*}
\setenumerate{leftmargin=0.45cm}

\newtheorem{thm}{Theorem}[section]

\newtheorem{prop}[thm]{Proposition}

\newtheorem{examp}[thm]{Example}
\newtheorem{rema}[thm]{Remark}

\def\ZZ{\mathbb Z}
\def\gg{\mathfrak g}
\def\kk{\mathfrak k}
\def\HH{\mathrm H}
\def\TTT{\mathrm T}
\def\TTTc{\mathrm T^{\mathrm c}}
\def\vv{\mathbf v}
\def\Sigm{\mathrm S}
\def\Sigmc{\mathrm S^{\mathrm c}}
\def\fiel{\mathbb F}
\def\Lc{\Lambda^{\mathrm c}}

\def\LLL{L}

\def\CC{\mathbb C}
\def\AAA{\mathcal A}
\def\Hom{\mathrm{Hom}}
\def\Coder{\mathrm{Coder}}
\def\Der{\mathrm {Der}}
\def\RR{\mathbb R}
\def\bra{[ \,\cdot\, , \,\cdot\,]}
\def\DDD{\mathcal D}
\def\KKK{\mathcal K}
\def\MMM{\mathcal M}
\def\VVV{\mathcal V}
\def\WWW{\mathcal W}
\def\OOO{\mathcal O}
\def\HHH{\mathcal H}
\def\CCC{\mathcal C}
\def\LLL{\mathcal L}

\def\ppartial{\mathcal D}

\def\Nsddata#1#2#3#4#5{
   (#4
     \begin{CD}
      \null @>#2>> \null\\[-3.2ex]
      \null @<<#3< \null
     \end{CD}
    #1, #5)
}
\long
\def\MSC#1\EndMSC{\def\arg{#1}\ifx\arg\empty\relax\else
      {\par\narrower\noindent
      2010 Mathematics Subject Classification. #1\par}\fi}

\long
\def\KEY#1\EndKEY{\def\arg{#1}\ifx\arg\empty\relax\else
    {\par\narrower\noindent
      Keywords and Phrases: #1\par}\fi}

\title [Formal Kuranishi parametrization via homological perturbations] 
{The formal Kuranishi parametrization via the universal
 homological perturbation theory solution of the deformation equation}

\author{Johannes Huebschmann  }
\address{
\noindent
USTL, UFR de Math\'ematiques\\
CNRS-UMR 8524
\\
Labex CEMPI (ANR-11-LABX-0007-01)
\\
\newline
59655 Villeneuve d'Ascq Cedex, France\\
Johannes.Huebschmann@math.univ-lille1.fr
 }

\date{\today}
\numberwithin{equation}{section}

\begin{document}
\setcounter{page}{1}

\maketitle
\medskip
\centerline
{To Tornike Kadeishvili}
\medskip

\begin{abstract} Using homological perturbation theory, 
we develop a formal version of the miniversal deformation
associated with a deformation problem 
controlled by a differential graded Lie algebra
over a field of characteristic zero.
Our approach includes a formal version of the Kuranishi method
in the theory of deformations of complex manifolds.
\end{abstract}

\MSC 

\noindent
Primary: 
14D15 %formal methods

\noindent
Secondary: 
13D10, %deformations and infinitesimal methods
14B07, %deformations of singularities
14B12, %local deformation theory
16S80, %deformations of rings
32G05, %deformations of complex structures
32G08, %deformations of fiber bundles
32S60, %deformations of singularities
58K60  %deformations of singularities 

\EndMSC

\KEY Deformation theory, deformation controlled by a differential graded
Lie algebra, miniversal deformation, formal deformation, Kuranishi method

 \EndKEY
{\tableofcontents}

\section{Introduction} 
The Kuranishi map (see Section \ref{fi} below) 
provides 
a parametrization of a neighborhood of a point of the base of a 
Kodaira-Spencer deformation
by a neighborhood of $0$
in the tangent space to the base at that point.
In \cite{MR1932522}, 
prompted by \cite{MR1609624}, J. Stasheff and I developed
a universal solution for the deformation equation,
referred to there as \lq\lq master equation\rq\rq;
that universal solution is
phrased as a Lie algebra valued twisting cochain.
In Theorem \ref{6.7.2} below I show that
that solution recovers a formal version of
the Kuranishi map parametrization.
Lurking behind this observation is, for a general deformation
problem controlled by some differential graded Lie algebra,
 a general formal construction of 
the miniversal deformation.
I shall explain this elsewhere.
Suffice it to mention here that
according to a paradigm adopted by Deligne, Drinfeld, Quillen and others,
any deformation theory problem in characteristic zero
is controlled by a differential graded
Lie algebra, unique up  to homotopy equivalence of differential graded
Lie algebras,
in fact,
up to $L_{\infty}$-equivalence.
See also \cite[Section 5]{MR1710565}.
For a detailed account of deformation theory, see, e.g., \cite{MR981617}.
In the present paper, the various uses of the attribute \lq\lq formal\rq\rq\ 
 (formal miniversal deformation, formal geometry, formal map, formal power
series, formal Lie algebra) 
all reduce to the same mathematical notion, that is, to that of 
a morphism between certain cocommutative coalgebras. 

It is a pleasure to dedicate this paper to Tornike Kadeishvili.
In \cite{MR1710565} I pointed out that there is an intimate
relationship between Berikashvili's functor $\DDD$ and
deformation theory. In particular,
cf. \cite[Section 5]{MR1710565},
there is a striking similarity between
Berikashvili's functor $\DDD$
and a certain functor written 
in the literature
as $\mathrm{Def}_\gg$ for a differential graded Lie
algebra $\gg$.
Here I explain a small aspect of that relationship.
Also, working out the connections with
\cite{MR517083,schlstas} would be an exceedingly attractive project.

\section{Conventions}
The base field  $\fiel$ has characteristic zero.
Following a suggestion of J. C. Moore,
we take graded objects to be {\em externally graded\/}
\cite[VI.2]{maclaboo}, that is, 
rather than taking direct sums,
we work with the homogeneous constituents themselves.
An $\fiel$-chain complex is a $\ZZ$-graded $\fiel$-vector space
$C=\{C_j\}_{j \in \ZZ}$ together 
with a homogeneous square zero operator $d$ of degree $-1$.
We denote the de Rham functor by $\AAA$.
We denote by $s$ the suspension operator in the category of
$\fiel$-chain complexes. Thus, 
for an $\fiel$-chain complex 
$C=\{C_j\}_{j \in \ZZ}$,
the suspended chain complex $sC$ has  $(sC)_{j+1} = C_j$ ($j \in \ZZ$),
and the identity $ds +sd=0$ characterizes the differential on $sC$.
We denote the counit of an $\fiel$-coalgebra
$C$ by $\varepsilon \colon C \to \fiel$,
and we use the same notation  for the identity morphism on an object 
as for the object itself.
Given a map $f \colon X \to Y$,
we use the standard notation $f|_Z$ for the restriction of $f$ to 
a subobject $Z$ of $X$.
We denote the symmetric $\fiel$-algebra on an $\fiel$-vector space $W$ by
$\Sigm[W]$.
We freely use standard homological perturbation terminology
such as contractions etc. The reader can find details in
\cite{MR1109665, MR1932522, MR2640649,  MR2762544, MR2762538,  MR2820385}.
Among the classical references are
\cite{MR0056295, MR0065162, MR0220273, MR0301736, MR662761}.
We denote by $\CCC[\gg]$
the classifying coalgebra or, equivalently,
Cartan-Chevalley-Eilenberg coalgebra, associated with a differential graded
Lie algebra $\gg$.

\section{Some formal geometry}
\label{formalgeom}
We use the term \lq\lq formal geometry\rq\rq\ 
in the sense of \cite{MR0266195, MR0287566, MR2062626}.
That
use relates to that in the sense of 
Grothendieck's
Bourbaki talk in May 1959
 \cite{MR1603467} and to that of his school, see, e.g., \cite{hartsboo} 
and the references there.
Working out the precise relationship would be an attractive endeavor.
Suffice it to mention here the following: The operation of completion
leads to the notion of formal scheme 
\cite[II.9 p.~ 190]{hartsboo};
Grothendieck's Theorem on formal functions 
\cite[Th\'eor\`eme (4.1.5)]{MR0217085},
\cite[III Theorem 11.1 p.~ 277]{hartsboo}
involves this operation of completion;
and the operation of completion admits a description in terms of 
an associated symmetric coalgebra, the basic object on which the 
notion of formal function we use below relies.

\subsection{Symmetric coalgebra}
\label{symmco}
We take the symmetric $\fiel$-coalgebra
functor
$\Sigmc$ 
from 
the category of $\fiel$-vector spaces to that of
cocomplete coaugmented cocommutative 
$\fiel$-coalgebras 
to be the
right adjoint to the
obvious forgetful functor from 
the category of
cocomplete coaugmented cocommutative 
$\fiel$-coalgebras to that of $\fiel$-vector spaces.
We comment on the terminology in Remark \ref{cofree} below.

Let $U$ be an $\fiel$-vector space. 
We denote by 
$\pi_U\colon \Sigmc[U] \to U$
the evaluation at $U$ of the counit
of the adjunction, and we refer to $\pi_U$ as the
{\em cogenerating morphism\/} of $\Sigmc[U]$;
the functor  $\Sigmc$ being right adjoint to the
forgetful function spelled out above
expresses the universal property of
the symmetric $\fiel$-coalgebra  $\Sigmc[U]$
cogenerated by $U$, more precisely,
the universal property of the cogenerating morphism of $\Sigmc[U]$.
The symmetric $\fiel$-coalgebra $\Sigmc[U]$
arises from 
the corresponding
 cocomplete coaugmented cocommutative graded  $\fiel$-coalgebra
$\{\Sigmc_{j}[U]\}_{j \in \mathbb N}$.
With the standard conventions
 $\Sigmc_{0}[U] = \fiel$ and  $\Sigmc_{1}[U] =U$,
and we refer, for $ j \geq 0$, to $\Sigmc_{j}[U]$ as the 
 $j$-th {\em symmetric copower\/} of $U$.
For $\ell \geq 0$, let
$\Sigmc_{\leq \ell}[U] \subseteq \Sigmc[U]$
denote the coaugmentation filtration degree $\ell$  
constituent
of $\Sigmc[U]$;
it acquires itself a 
cocomplete coaugmented cocommutative $\fiel$-coalgebra 
structure in such a way that the 
composite of the injection $\Sigmc_{\leq \ell}[U] \subseteq 
\Sigmc[U]$
with the cogenerating morphism of $\Sigmc[U]$
yields
a cogenerating morphism for $\Sigmc_{\leq \ell}[U]$.

The operation of addition on $U$ induces a multiplication map
on  $\Sigmc[U]$ that turns  $\Sigmc[U]$ into a
commutative and cocommutative $\fiel$-bialgebra,
and multiplication by $-1$ on $U$
yields an antipode such that $\Sigmc[U]$
becomes an  $\fiel$-Hopf algebra.
Since the ground field $\fiel$ has characteristic zero,
the canonical
morphism 
\begin{equation}
\mathrm{can} \colon \Sigm[U] \longrightarrow \Sigmc[U]
\label{canh}
\end{equation}
defined on the symmetric 
$\fiel$-algebra $\Sigm[U]$, endowed with its classical $\fiel$-Hopf
algebra structure,
is an isomorphism
of $\fiel$-Hopf algebras.
The Hopf algebra
$\Sigmc[U]$
is canonically isomorphic to the divided power Hopf algebra generated by $U$.
For reference in Subsections 
\ref{formalfunc} and
\ref{heuristic} below, 
we recall some of the details:

Let $B$ denote an $\fiel$-basis of $U$.
The choice of $B$ determines an isomorphism
from the polynomial algebra $\fiel [B]$ on $B$ onto the
symmetric algebra $\Sigm[U]$.
The 
divided power Hopf algebra $\Gamma[B]$ on $B$
is the commutative and cocommutative $\fiel$-Hopf algebra
having algebra generators $\gamma_j(b)$ ($j \geq 1$), as $b$ 
ranges over $B$,
subject to the relations
$\gamma_j(b) \gamma_k(b) =\tbinom {j+k}j \gamma_{j+k}(b)$
($j,k \geq 0$), where $\gamma_0(b)=1$. 
The formula $\Delta (\gamma_n(b))= \sum _{j+k=n} 
\gamma_j(b) \otimes\gamma_k(b)$ ($n \geq 0$)
characterizes the diagonal map of $\Gamma[B]$.
The composite of the canonical  $\fiel$-Hopf algebra isomorphism 
$\Gamma[B] \to \Sigm[U]$
with \eqref{canh}
is an isomorphism of $\fiel$-Hopf algebras.
Hence the
$\fiel$-coalgebra that underlies $\Gamma[B]$
yields a somewhat more concrete realization of $\Sigmc[U]$.
We postpone more details
to the appendix.

\subsection{Formal functions}
\label{formalfunc}
We maintain the notation $U$ for an $\fiel$-vector space. The 
 $\fiel$-algebra $\Hom(\Sigmc[U],\fiel)$ dual
to the symmetric coalgebra $\Sigmc[U]$ cogenerated by $U$
is the $\fiel$-algebra of {\em formal functions\/}
on $U$ or, equivalently, 
the completion of the symmetric $\fiel$-algebra $\Sigm[U^*]$
on the $\fiel$-dual $U^*$ with respect to the filtration
of $\Sigm[U^*]$
associated with the canonical augmentation map
$\varepsilon \colon \Sigm[U^*] \to \fiel$ of $\Sigm[U^*]$.
The coaugmentation  map $\eta \colon \fiel \to 
 \Sigmc[U]$ of $\Sigmc[U]$
induces
an augmentation map 
of the $\fiel$-algebra $\Hom(\Sigmc[U],\fiel)$, and
we also denote this augmentation map 
by $\varepsilon \colon \Hom(\Sigmc[U],\fiel) \to \fiel$.
For intelligibility we note that,
relative to a  basis of $U$, 
the symmetric $\fiel$-algebra $\Sigm[U^*]$
on $U^*$ comes down to the polynomial algebra in the corresponding
coordinate functions
and the $\fiel$-algebra of formal functions
 to the algebra of formal power series in the
coordinate functions:
Suppose for simplicity that $U$ is finite-dimensional,
choose  an $\fiel$-basis $b_1, \dots, b_n$ of
$U$, and 
write $z_j$ for the
corresponding coordinate functions on $U$
determined by $\langle z_j,b_k\rangle = \delta_{j,k}$ ($1 \leq j,k \leq n$).
A 
formal power series $\sum a_{\mathbf j} z_1^{j_1} \dots z_n^{j_n}$
in the variables
$z_1,\dots,z_n$
with coefficients $a_{\mathbf j} \in \fiel$
determines the 
$\fiel$-linear map
\begin{equation}
\Sigmc[U] \longrightarrow \fiel,\ 
\gamma_{j_1}(b_1)\cdot \ldots \cdot \dots \gamma_{j_n}(b_n) \longmapsto a_{\mathbf j},
\end{equation}
and every such $\fiel$-linear map arises in this way from a unique
formal power series.

\subsection{Formal maps}
Let $U$ and $W$ be $\fiel$-vector spaces, and let $\ell \geq 0$.
Below we use the notation
$(f^{[0]}, \ldots, f^{[\ell]})\colon \Sigmc_{\leq \ell}[U] \to W$
for a linear map $\Sigmc_{\leq \ell}[U] \to W$
having homogeneous constituents
$f^{[j]}\colon \Sigmc_j[U] \to W$.

The assignment to $v \in U$ of
$(1,v, v \otimes v, \ldots, v^{\otimes \ell}) \in \Sigmc_{\leq \ell}[U]$
yields a map
from  $U$ to  $\Sigmc_\ell[U] $, and we denote this map by
$\mathrm{can}\colon U \to  \Sigmc_\ell[U] $.
We define an
{\em algebraic map\/} $f \colon U \to W$ 
{\em of degree\/} $\leq \ell$
to be a map which arises as the composite
\begin{equation}
\xymatrixcolsep{4.5pc}
\xymatrix{
U \ar[r]^{\mathrm{can}}
& \Sigmc_{\leq \ell}[U] 
\ar[r]^{\phantom{aa}(f^{[0]}, \ldots, f^{[\ell]})} 
&
W
}
\end{equation}
of $\mathrm{can}\colon U \to \Sigmc_{\leq \ell}[U]$ with a linear map
$(f^{[0]}, \ldots, f^{[\ell]})\colon \Sigmc_{\leq \ell}[U] \to W$.
An
{\em algebraic map\/} from $U$ to $W$ 
is then a map which is an algebraic map of some degree
$\leq \ell$.

\begin{rema}{\rm
The terminology \lq\lq algebraic map\rq\rq\ is that in
\cite[\S 80C]{MR892316};
that in
\cite {MR0301078}, \cite[\S 9]{MR0065162}
is \lq\lq map of finite degree\rq\rq;
the descriptions in these references
do not involve the symmetric coalgebra, however.
}
\end{rema}

Choose an $\fiel$-basis $b_1,\dots,b_j, \ldots $ of
$U$ and 
write $z_j$ for the
corresponding coordinate function on $U$
determined by $\langle z_j,b_k\rangle = \delta_{j,k}$.
A map $f \colon U \to W$ 
is an algebraic map of degree $\leq \ell$
if and only if, for some $s \leq \ell$,
there is a unique $W$-valued polynomial
$q$ in $s$ variables
of degree $\leq \ell$
such that
$f(z_1 b_1 + \ldots + z_sb_s)= q(z_1, \ldots, z_s)$.
The derivative and the higher derivatives of an algebraic map
make sense algebraically, and hence
the Taylor coefficients 
$\tfrac 1 {j!} f^{(j)}_o$ of an algebraic map 
$f$ at the origin are available.
For $ j \geq 1$,
under the 
degree $j$ constituent 
\begin{equation}
\Sigm^j[U] \longrightarrow \Sigmc_{j}[U],
\end{equation}
of the canonical isomorphism \eqref{canh},
the Taylor coefficient 
$\tfrac 1 {j!} f^{(j)}_o$ of an algebraic map $f$ at the origin 
goes to the constituent $f^{[j]}$.

We define a {\em formal map\/}
from $U$ to $W$ to be
an $\fiel$-linear map 
$\Sigmc[U] \to W$.
An algebraic map 
from $U$ to $W$ 
is, then, a formal map $\Sigmc[U] \to W$
that is zero on  all but finitely many homogeneous constituents
of $\Sigmc[U]$.
Relative to a basis of $U$,
a formal map 
from $U$ to $W$
has the form of a formal power series
in the coordinate functions dual to that basis, with coefficients
from $W$, and the algebraic
functions then amount to the
formal power series that are actually algebraic.

\subsection{Coaffine coalgebras}

Recall that an {\em affine\/} $\fiel$-algebra is a finitely generated 
commutative $\fiel$-algebra, cf., e.g., \cite{hartsboo, MR2977456}. 
In other words, 
an affine $\fiel$-algebra can be written as a quotient algebra
of the symmetric $\fiel$-algebra $\Sigm[W]$ on a
finite-dimensional $\fiel$-vector space $W$.
In particular, an affine $\fiel$-algebra
$A$
isomorphic to $\Sigm[W]$ characterizes the affine
$\fiel$-space having $W$ as its associated vector space,
and an explicit isomorphism between $A$ and
$\Sigm[W]$ then amounts to introducing a coordinate system on $W$.
More generally, an affine $\fiel$-algebra $A$
characterizes an algebraic set, and
an isomorphism $A \cong \Sigm[W]/I$
for some ideal $I$ yields a coordinate ring
for that algebraic set.
Furthermore, an epimorphism or, equivalently, an augmentation map
$A \to \fiel$ corresponds to a point of that algebraic set.
For example, the standard augmentation map
$\Sigm[W] \to \fiel$ corresponds to the origin of $W$.

Accordingly,
we define an $\fiel$-coalgebra $C$ to be {\em coaffine\/}
if, for some  $\fiel$-vector space $W$ of finite dimension,
the coalgebra $C$ embeds,  as an $\fiel$-coalgebra,
into the symmetric $\fiel$-coalgebra
$\Sigmc[W]$ cogenerated by $W$.
Such an embedding yields the analogue of a coordinate ring.
For a coaffine coalgebra $C$,
a coaugmentation map $\eta \colon \fiel \to C$
is the analogue of a point.

Consider a finite-dimensional $\fiel$-vector space $W$.
The symmetric $\fiel$-coalgebra
$\Sigmc[W]$ cogenerated by $W$
has a single coaugmentation, the canonical coaugmentation map
$\eta \colon \fiel \to \Sigmc[W]$.
Indeed, 
the canonical coaugmentation map
$\eta \colon \fiel \to \Sigmc[W]$
induces the canonical augmentation map
$\varepsilon \colon \Hom(\Sigmc[W], \fiel) \to \fiel$,
and this is the only augmentation map of $\Hom(\Sigmc[W], \fiel)$.
The algebra 
$\fiel[W]=\Sigm[W^*]$ is the coordinate algebra $\fiel[W]=\Sigm[W^*]$ 
of $W$ and admits a canonical augmentation map
 $\varepsilon \colon \Sigm[W^*] \to \fiel$ , and
$\Hom(\Sigmc[W], \fiel)$ is canonically isomorphic to the completion
of $\fiel[W]$
relative to the augmentation filtration
with respect to $\varepsilon$. After a choice of basis $b_1, \ldots, b_n$ of $W$,
the algebra $\Sigm[W^*]$ comes down to the polynomial algebra
$\fiel[z_1,\ldots,z_n]$ in the coordinates $z_1,\ldots,z_n$
dual to the basis
and $\Hom(\Sigmc[W], \fiel)$
 to the  algebra
$\fiel[[z_1,\ldots,z_n]]$ of formal power series
in $z_1,\ldots,z_n$.
While a vector $q=(q_1, \ldots, q_n)$ 
of $W$ determines the augmentation map
\[
\varepsilon_q \colon \Sigm[W^*] \longrightarrow \fiel,\ z_j \longmapsto q_j,
\]
since
$\fiel[[z_1,\ldots,z_n]]$ is a (complete) local ring,
the augmentation map $\varepsilon_q$ does not extend
to
$\Hom(\Sigmc[W], \fiel)$ unless 
$(q_1, \ldots, q_n)$ is the origin of $W$. 
Thus a coaffine coalgebra admits at most one point.

\subsection{Formal maps on $\fiel$-varieties}
Consider two finite-dimensional $\fiel$-vector spaces $U$ and $W$
and view $U$ and $W$ as affine $\fiel$-varieties,
cf. \cite{hartsboo, MR2977456}.
An algebraic map $f \colon U \to W$
is then simply a morphism in the category of $\fiel$-varieties.

Consider an affine $\fiel$-variety $\VVV \subseteq U$, let
$A[\VVV]$ denote its coordinate ring, 
recall that $\Sigm[U^*]$ is the coordinate ring of $U$,
write $A[\VVV]$ as $A[\VVV]\cong \Sigm[U^*]/I_\VVV$,
choose a finite set of generators of
the vanishing ideal  $I_\VVV$ of $\VVV$,
let $V$ denote the $\fiel$-vector space
whose dual $V^*$ 
has those generators as its basis
and, accordingly, write 
$A[\VVV]$ 
as the cokernel
of the resulting $\fiel$-linear map
$\vartheta \colon \Sigm[U^*] \otimes V^* \to \Sigm[U^*]$
so that, relative to the obvious
$\Sigm[U^*]$-module structures on  $\Sigm[U^*] \otimes V^*$ and $\Sigm[U^*]$,
\begin{equation}
\Sigm[U^*] \otimes V^* \stackrel{\vartheta}\longrightarrow \Sigm[U^*]\longrightarrow A[\VVV]
\longrightarrow 0
\label{alg}
\end{equation}
is an exact sequence in the category of  $\Sigm[U^*]$-modules.
Since $U$ and $V$ are finite-dimensional,
the morphism
$\vartheta \colon \Sigm[U^*] \otimes V^* \to \Sigm[U^*]$
of $\Sigm[U^*]$-modules arises,
with respect 
to the obvious
$\Sigmc[U]$-comodule structures on  $\Sigmc[U] \otimes V$ and $\Sigmc[U]$,
 from a uniquely determined morphism
$\psi \colon \Sigmc[U] \to \Sigmc[U] \otimes V$
of $\Sigmc[U]$-coalgebras.
Indeed, the chosen finite set $y_1, \ldots, y_\ell \in \Sigm[U^*]$
 of generators of $I_\VVV$ induces an $\fiel$-linear map 
$y \colon \Sigmc[U] \to V= \fiel^{\ell}$, and $\psi$ arises as the composite
\begin{equation*}
\xymatrixcolsep{3.5pc}
\xymatrix{
 \Sigmc[U] \ar[r]^{\Delta\phantom{aaa}}
&\Sigmc[U]  \otimes \Sigmc[U]
\ar[r]^{\phantom{aa} \Sigmc[U]  \otimes y} 
&\Sigmc[U]  \otimes V .
}
\end{equation*}
Define the $\Sigmc[U]$-comodule 
$C[\VVV]$ by requiring that
\begin{equation}
\Sigmc[U] \otimes V \stackrel{\psi}\longleftarrow \Sigmc[U]
\longleftarrow C[\VVV]
\longleftarrow 0
\label{coalg}
\end{equation}
be an exact sequence of
$\Sigmc[U]$-comodules.
The $\Sigmc[U]$-comodule structure of  $C[\VVV]$
passes to a cocommutative $\fiel$-coalgebra structure
on $C[\VVV]$. The resulting $\fiel$-coalgebra
$C[\VVV]$ is manifestly coaffine.
We refer to 
$C[\VVV]$ as the {\em coordinate coalgebra\/}
of $\VVV$.

Suppose that the origin $0$ of $U$ belongs to $\VVV$.
Then the coaugmentation map ${\eta \colon \fiel \to \Sigmc[U]}$
factors through a coaugmentation map
${\eta_\VVV \colon \fiel \to C[\VVV]}$, and the $\fiel$-algebra
$\Hom (C[\VVV],\fiel)$
is canonically isomorphic to the completion $\widehat \OOO_0$
of the local ring  $\OOO_0$ of $0$ on $\VVV$, cf., e.g.,
\cite[I.3 p. 16]{hartsboo}, \cite[p. 71]{MR2977456}.
(The hypothesis in \cite{hartsboo} that
the ground field be algebraically closed is irrelevant at this point.
In standard algebraic geometry,
the algebraic closedness of the ground field
is needed to ensure that the familiar association
to a variety of its annihilation ideal
 induces  
not only an injection into but in fact a
bijection onto
the family of radical ideals.)
The subalgebra 
of $\Hom (C[\VVV],\fiel)$ that consists of the $\fiel$-linear
maps  arising as restrictions of
$\fiel$-linear
maps $\Sigmc[U] \to \fiel$
which are non-zero on at most
finitely many homogeneous constituents
$\Sigmc_j [U]$ ($j \geq 0$)
of $\Sigmc[U]$
recovers the coordinate ring
$A[\VVV]$ of $\VVV$.
Since the origin of $U$ belongs to $\VVV$, the standard augmentation map
of $\Sigm[U^*]$ factors through an augmentation map
${\varepsilon_\VVV \colon A[\VVV] \to \fiel}$,
and the $\fiel$-algebra $\Hom(\Sigmc[U],\fiel)\cong\widehat \OOO_0$
is canonically isomorphic to the completion 
of $A[\VVV]$ relative to the augmentation filtration.
The affine $\fiel$-variety $\VVV$
being considered as an abstract variety
(i.e., independently of the embedding into $U$),
by introducing suitable
coordinates we can, of course, arrange for any point of
$\VVV$ to be the origin
with respect to an embedding of $\VVV$ into $U$.

We define a {\em formal map\/}
$f \colon \VVV \to W$ to be 
an $\fiel$-linear map 
$C[\VVV] \to W$.
Let $\WWW \subseteq W$ be an $\fiel$-variety.
More generally,
we define a {\em formal map\/}
$f \colon \VVV \to \WWW$ to be 
a formal map
$f \colon \VVV \to W$,
i.e.,
an $\fiel$-linear map 
$C[\VVV] \to W$,
that has the property that
the values of the induced morphism
$C[\VVV] \to \Sigmc[W]$
of $\fiel$-coalgebras lie in
$C[\WWW]\subseteq\Sigmc[W]$. 

It is manifest that any formal map $f \colon \VVV \to \WWW$ 
arises as
the restriction
of an $\fiel$-linear map
$\Sigmc[U] \to W$. 
A formal map $f\colon \VVV \to \WWW$ corresponds to an
 ordinary morphism $\VVV \to \WWW$ 
of algebraic $\fiel$-varieties
if and only if $f$ arises  from
an $\fiel$-linear map
$\Sigmc[U] \to W$
which is non-zero on at most
finitely many homogeneous constituents
$\Sigmc_j [U]$ ($j \geq 0$) of $\Sigmc[U]$.

\section{Generalities on differential 
graded Lie algebras and deformations}
\label{general}

\subsection{Setting}

Consider a differential graded $\fiel$-Lie algebra $\gg$
that is non-positive in the sense that
$\gg_j$ is zero for $ j >0$.

\begin{examp}[Kodaira-Spencer Lie algebra] 
\label{KS}
{\rm See \cite{MR0112157, MR0112154}. Take $\fiel$ to be the field $\CC$
of complex numbers,  
consider a complex manifold $M$, 
let $\tau_M$
denote the holomorphic tangent bundle
of $M$, let $\overline \partial$ be the corresponding
Dolbeault operator,
and let
$\gg = (\AAA^{(0,*)}(M,\tau_M), \overline \partial)$
be the {\em Kodaira-Spencer algebra\/} of $M$,
endowed with the homological grading 
\begin{equation}
\gg_0 = \AAA^{(0,0)}(M,\tau_M),
\quad
\gg_{-1} = \AAA^{(0,1)}(M,\tau_M),
\quad
\gg_{-2} = \AAA^{(0,2)}(M,\tau_M),
\quad
\text{etc.}
\end{equation}
Thus, with our convention on degrees,
$\HH_*(\gg) = \HH^{-*}(M,\tau_M)$,
the
cohomology of $M$ with values in the sheaf of germs of
holomorphic vector fields.
}
\end{examp}

\begin{rema} 
{\rm The deformation theory of twilled Lie-Rinehart algebras
\cite{twilled, MR1764437}
leads to a generalization of the Kodaira-Spencer algebra.
Details are yet to be worked out.
}
\end{rema}

A similar differential graded Lie algebra also controls
deformations of other mathematical objects such as 
those of algebras, of complex manifolds,
of germs of complex hypersurfaces,
of singularities,
of representations of discrete groups,
of flat vector bundles,
of commutative algebras,
of rational homotopy types \cite{MR517083,schlstas},
etc.

Let
\begin{equation}
\Nsddata {\gg} {\phantom a\nabla}{\phantom a\pi}
{\HH(\gg)}h 
\label{6.0}
\end{equation}
be a contraction of $\fiel$-chain complexes, and let
$
\HHH = \ker (h) \cap \ker(d) = \nabla \HH(\gg)$.
Then,
for $j\leq 0$, the degree $j$ constituent $\gg_j$ decomposes as
\begin{equation}
\gg_{0} = \HHH_{0} \oplus h(d\gg_{0}),
\quad \gg_j = d \gg_{j+1} \oplus \HHH_j \oplus h(d\gg_j),\ \text{for}\ j<0.
\label{4.5.1}
\end{equation}
In the situation of Example \ref{KS},
\eqref{4.5.1} plays the role of a {\em Hodge decomposition\/}.
On p.~19 
of \cite{MR0195995},
Nijenhuis and Richardson indeed
refer to a decomposition of the kind \eqref{4.5.1}
(not using the language of homological perturbation theory)
as a \lq\lq Hodge decomposition\rq\rq.

\subsection{Universal solution of the deformation equation
\cite[Theorem 2.7]{MR1932522}}

For ease of exposition, we recall that solution;
see also \cite[Theorem 1]{MR2762544} for a 
detailed proof of \cite[Theorem 2.7]{MR1932522}.

Consider
the graded symmetric coalgebra 
$\Sigmc[s\HH(\gg)]$ on $s\HH(\gg)$
which underlies
the classifying coalgebra
$\CCC [\HH(\gg)]$ 
of the graded Lie algebra $\HH(\gg)$, 
let $\tau_{\HH(\gg)} \colon \Sigmc[s\HH(\gg)] \to \HH(\gg)$
be the universal twisting cochain of $\HH(\gg)$,
and let $\tau_1$ denote the composite
\begin{equation*}
\tau_1 
\colon \Sigmc[s\HH(\gg)] \stackrel{\tau_{\HH(\gg)}}\longrightarrow
\HH(\gg)
\stackrel{\nabla}\longrightarrow\gg;
\end{equation*}
for consistency of the exposition, 
let $\DDD_1=0 \colon \Sigmc[s\HH(\gg)] \to \Sigmc[s\HH(\gg)]$,
and denote the ordinary differential of 
the classifying coalgebra
$\CCC [\HH(\gg)]$
of $\HH(\gg)$
(Cartan-Chevalley-Eilenberg differential)
by $\DDD^1\colon \Sigmc[s\HH(\gg)] \to \Sigmc[s\HH(\gg)]$.

The universal twisting cochain
$\tau_{\HH(\gg)}$ of $\HH(\gg)$
and hence
the morphism
$\tau_1$,
both defined on $\Sigmc[s\HH(\gg)]$,
are non-zero only on the homogeneous degree $1$ constituent
$\Sigmc_1[s\HH(\gg)] (= s\HH(\gg))$ of $\Sigmc[s\HH(\gg)]$.
Define the family 
$\{\tau^\ell \colon\Sigmc_\ell[s\HH(\gg)] \to\gg\}_{\ell \geq 2}$ 
of $\fiel$-linear maps recursively by
\begin{alignat}{1}
\tau^\ell &= \frac 12 h([\tau^1,\tau^{b-1}] +  \dots + [\tau^{b-1},\tau^1])
\label{3.1.4}
\end{alignat}
and, for $\ell \geq 2$, let
$\DDD^{\ell-1}$
be
the coderivation 
of $\Sigmc[s\HH(\gg)]$
that the identity
\begin{equation}
\tau_{\HH(\gg)} \DDD^{\ell-1} =
\frac 12 \pi
([\tau^1,\tau^{\ell-1}] +  \dots + [\tau^{\ell-1},\tau^1])
\colon
\Sigmc_\ell[s\HH(\gg)]
\to \HH (\gg)
\label{3.1.6}
\end{equation}
characterizes.

Since the values of $\tau_1 = \tau^1$ lie in the cycles of
$\gg$, for $\ell=2$, 
the formula \eqref{3.1.6} reproduces the ordinary differential
of the classifying coalgebra of $\HH(\gg)$,
that is, 
the operator $\DDD^1$ coincides with that differential,
and there is no conflict of notation.
Recall that, for a filtered chain complex $X$, a
{\em perturbation\/} of the differential
$d$ on $X$ is a homogeneous operator $\partial$ on $X$
of degree $-1$ that lowers filtration such that the operator
$d+\partial$ has square zero, i.e., is itself a differential on $X$.
For later refence, we spell out the following;
see \cite[Theorem 2.7]{MR1932522} for more details.

\begin{prop}
\label{prop1}
\begin{enumerate}
\item[{\rm (i)}]
The infinite sum
\begin{equation}
\DDD= \DDD^1 + \DDD^2 + \dots 
\label{3.1.5}
\end{equation}
converges (naively, that is, when the sum is 
applied to an element, only finitely many terms are non-zero)
 and yields a
coalgebra differential
on $\Sigmc[s\HH(\gg)]$
so that,
relative to the coaugmentation filtration,
 $\DDD^2 + \dots $
is a perturbation of
the Lie algebra homology operator
$\DDD^1$ on
$\CCC [\HH(\gg)]$; the infinite sum
$\tau^1 + \tau^2 + \dots$ 
converges and yields a Lie algebra
 twisting cochain 
\begin{equation}
\tau =\tau_1 + \tau^2 +  \dots\
\colon \Sigmc_{\DDD}[s\HH(\gg)] \to\gg
\label{ltc}
\end{equation}
such that 
\begin{align}
\pi \tau&=\tau_{\HH(\gg)}\colon \Sigm^{\mathrm c}[s\HH(\gg)] \longrightarrow \HH(\gg),
\label{twist33}
\\
h \tau &= 0; \label{twist44}
\end{align}
and $\tau$ and $\DDD$ are natural in the contraction {\rm \eqref{6.0}}.

\item[{\rm (ii)}] The adjoint
$\overline \tau \colon \Sigm_{\ppartial}^{\mathrm c}[s\HH(\gg)]\to
\mathcal C[\gg]$ 
of the Lie algebra twisting cochain 
$\tau \colon \Sigmc_{\DDD}[s\HH(\gg)] 
\to \CCC [\gg]$, necessarily a morphism of
coaugmented differential graded coalgebras,
extends to a contraction
\begin{equation}
   \left(\Sigm_{\ppartial}^{\mathrm c}[s\HH(\gg)]
     \begin{CD}
      \null @>{\overline \tau}>> \null\\[-3.2ex]
      \null @<<{\Pi}< \null
     \end{CD}
    \mathcal C[\gg], H\right)
\label{3.1.18}
  \end{equation}
of chain complexes which is natural in terms of the data. 

\item[{\rm (iii)}]
The adjoint
$\overline \tau \colon \Sigmc_{\DDD}[s\HH(\gg)] 
\to \CCC [\gg]$ of the Lie algebra twisting cochain $\tau$
induces  an isomorphism on homology.

\item[{\rm (iv)}]
If  $\pi[\tau^j,\tau^k] = 0$ for $j+k>2$, that is, if
$[\tau^j,\tau^k] \colon 
\Sigmc[s\HH(\gg)] \to\gg$
does not hit the summand $\HHH$ for $j+k>2$, 
the operators $\DDD^\ell$ are zero for $\ell \geq 2$,
that is, the differential graded Lie algebra
$\gg$ is formal in the sense that it is
$L_\infty$-equivalent to its homology Lie algebra. \qed
\end{enumerate}
\end{prop}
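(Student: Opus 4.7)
The plan is to establish the four items largely by a combined induction/homological perturbation argument, exploiting the universal property of the symmetric coalgebra to reduce identities of coderivations to identities between their projections onto $s\HH(\gg)$.

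First I would dispose of naive convergence. On each symmetric copower $\Sigmc_\ell[s\HH(\gg)]$ the recursion~\eqref{3.1.4} shows that $\tau^k$ vanishes for $k>\ell$, and likewise $\DDD^{k-1}$ vanishes on $\Sigmc_\ell$ for $k>\ell$ by \eqref{3.1.6}; hence both sums $\DDD^1+\DDD^2+\cdots$ and $\tau^1+\tau^2+\cdots$ are finite when evaluated on an element. Properties \eqref{twist33}--\eqref{twist44} are then immediate from the recursion: \eqref{twist44} because $h^2=0$ and $\tau^1$ lies in $\ker h$, and \eqref{twist33} because $\pi h=0$, so only the degree $1$ component of $\tau$ survives under $\pi$ and reproduces $\tau_{\HH(\gg)}$.

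The central step for~(i) is to verify, by induction on $\ell\geq 2$, the twisting cochain/master equation
\begin{equation*}
d\tau^\ell + \tau^1\DDD^{\ell-1} + \tfrac12\sum_{j+k=\ell}[\tau^j,\tau^k]=0
\end{equation*}
on $\Sigmc_\ell[s\HH(\gg)]$, together with $\DDD^2=0$ on the nose as a coderivation. Apply $d$ to the defining identity $\tau^\ell=\tfrac12 h\sum_{j+k=\ell}[\tau^j,\tau^k]$, use the contraction side conditions $dh+hd=\mathrm{id}-\nabla\pi$, $\pi h=0$, $h\nabla=0$, $h^2=0$, and split the resulting expression into its $\nabla\pi$ piece (which, by the defining formula~\eqref{3.1.6} for $\DDD^{\ell-1}$, equals $-\tau^1\DDD^{\ell-1}$) and its $hd$ piece. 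The latter vanishes by the graded Jacobi identity applied to the bracket $[\tau^j,\tau^k]$ and the inductive form of the master equation at lower symmetric degrees; this is the Kadeishvili/Gugenheim--Stasheff style bookkeeping that makes everything collapse. That $\DDD$ really is a differential, i.e.\ $\DDD^2=0$, is then equivalent, via the universal property of $\tau_{\HH(\gg)}$ on the cogenerating morphism, to the vanishing of $\pi\circ(\text{master equation defect})$, which the same identity yields. This inductive combinatorial step is, as expected, the main obstacle; it is essentially the content of \cite[Theorem~2.7]{MR1932522} whose detailed proof is in \cite[Theorem~1]{MR2762544}, to which I would ultimately refer.

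Items~(ii) and~(iii) would then follow from the standard (basic) perturbation lemma: Start with the canonical contraction $\bigl(\Sigmc[s\HH(\gg)]\rightleftarrows \CCC[\gg],H_0\bigr)$ obtained by the tensor-trick-style extension of~\eqref{6.0} along the cocommutative cotensor construction, treat $\DDD-\DDD^1$ as a perturbation of the Cartan--Chevalley--Eilenberg differential $\DDD^1$ (permissible since it lowers the coaugmentation filtration), and read off the perturbed contraction~\eqref{3.1.18}; that $\overline\tau$ is the perturbed comparison map follows by inspection of the recursion, while the existence of the contraction itself implies at once that $\overline\tau$ is a quasi-isomorphism, giving~(iii). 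Naturality in the data of~\eqref{6.0} is built into the perturbation lemma construction. Finally~(iv) is the easiest: under the assumption that $\pi[\tau^j,\tau^k]$ vanishes on $\Sigmc_\ell$ for every $\ell\geq 3$, identity~\eqref{3.1.6} forces $\tau_{\HH(\gg)}\circ\DDD^{\ell-1}=0$, and since $\DDD^{\ell-1}$ is a coderivation into $\Sigmc[s\HH(\gg)]$ that is determined by its composite with the cogenerating morphism $\pi_{s\HH(\gg)}$, we conclude $\DDD^{\ell-1}=0$ for all $\ell\geq 3$; hence $\DDD=\DDD^1$, which is precisely the statement that the induced $L_\infty$-structure on $\HH(\gg)$ reduces to the ordinary bracket, i.e.\ $\gg$ is formal.
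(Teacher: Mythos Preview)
The paper does not give a proof of this proposition at all: it is stated with a terminal \qed and, in the preceding text, explicitly attributed to \cite[Theorem~2.7]{MR1932522} with the remark that a detailed proof appears in \cite[Theorem~1]{MR2762544}. Your proposal therefore supplies considerably more than the paper does, and the sketch you give---naive convergence from the support of $\tau^k$ on $\Sigmc_\ell$, the verification of \eqref{twist33}--\eqref{twist44} from the side conditions, the inductive master-equation bookkeeping, and the perturbation-lemma derivation of the contraction \eqref{3.1.18}---is exactly the line of argument developed in those references, to which you yourself ultimately defer.

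One small correction in your description of~(ii): the perturbation in the basic perturbation lemma is applied on the \emph{large} side, not the small one. One starts from the symmetrized tensor-trick contraction between $(\Sigmc[s\HH(\gg)],0)$ and $(\Sigmc[s\gg],d_0)$, where $d_0$ is the coalgebra differential induced by the differential of $\gg$ alone; the bracket-part of the Cartan--Chevalley--Eilenberg differential on $\CCC[\gg]$ is then the perturbation of $d_0$, and the lemma \emph{produces} $\DDD$ on $\Sigmc[s\HH(\gg)]$ as output together with the perturbed contraction \eqref{3.1.18}. Your phrasing ``treat $\DDD-\DDD^1$ as a perturbation of $\DDD^1$'' inverts input and output. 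This does not affect the correctness of the overall strategy, only the order of exposition.
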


The Lie algebra twisting cochain \eqref{ltc},
viewed as an element of the differential graded Lie algebra
$\LLL = \Hom(\Sigmc_{\DDD}[s\HH(\gg)],\gg)$, 
is 
the universal solution of the deformation equation constructed
in \cite[Theorem 2.7]{MR1932522}.

\subsection{Postnikov tower}

For $k \leq 0$, let 
$\gg^{(k)}$ be the differential graded Lie ideal in $\gg$
which has 
$\gg^{(k)}_j = 0$ for
$k< j \leq 0$,
$\gg^{(k)}_k =h(d\gg_k)$, and
$\gg^{(k)}_j = \gg_j$ for
$j<k$, and consider the differential graded 
quotient Lie algebra
\begin{equation*}
\gg(k) = \gg \big / \gg^{(k)}.
\end{equation*}
As a chain complex,
$\gg(k)$
has 
$\gg(k)_j = \gg_j$ for
$k<j\leq 0$ and
$\gg(k)_k =d \gg_{k+1} \oplus \HHH_k$.
As $k$ varies, this construction yields a tower
\begin{equation*}
\cdots \longrightarrow \gg(-k-1) \longrightarrow \gg(-k) \longrightarrow \cdots \longrightarrow
\gg(-2) \longrightarrow \gg(-1) \longrightarrow \gg(0)=\HHH_0 =\HH_0(\gg)
\end{equation*}
of differential graded Lie algebras,
and $\gg$ is canonically isomorphic to the projective limit of this tower.
Since in ordinary rational homotopy theory, a tower
of this kind recovers a Postnikov system, we refer to this tower
as the {\em Postnikov tower\/} of $\gg$.

\subsection{Associated reduced differential graded Lie algebra}
We say that a differential graded Lie algebra $\gg$ is {\em reduced\/}
when $\gg_j$ is trivial for $ j \geq 0$.
To any differential graded Lie algebra $\gg$,
the following procedure
assigns a reduced
differential graded Lie algebra 
${\widetilde {\gg}= (\widetilde {\gg}_{-1},\widetilde {\gg}_{-2}) }$,
cf., e.g., \cite{MR1065894},
(but this observation was known before, e.g., to R. Hain).
In degree $-1$, 
let $\widetilde {\gg}_{-1}$
be a complement
in
$\gg_{-1}$
of the boundaries $d\gg_0 \subseteq \gg_{-1}$
and, for $j \leq -2$, let
$\widetilde {\gg}_j = \gg_j$.
Furthermore, define the differential
 and the graded
bracket on $\widetilde {\gg}$
by restriction.
This yields a differential graded Lie algebra
$\widetilde {\gg}$; the degree $-1$
constituent $\widetilde {\gg}_{-1} $
depends also on the choice of complement of the boundaries $d \gg_0$.

\subsection{Differential graded Lie algebras concentrated in degrees $-1$ and $-2$}

For any non-positive differential graded Lie algebra $\gg$,
together with a contraction of the kind \eqref{6.0},
setting $\kk = \widetilde \gg(-2)$
we obtain
a differential graded Lie algebra 
concentrated only in degrees
$-1$ and $-2$.

Let $\kk$ be a 
differential graded Lie 
algebra over $\fiel$ concentrated only in degrees
$-1$ and $-2$.
Let
$V = s \kk$ be its suspension in the category of $\fiel$-chain complexes,
and let $d$ denote the 
differential on $V$ that arises as the
suspension of the differential of $\kk$.
Thus $V$ is
concentrated
in degrees $0$ and $-1$.
Let
\begin{equation}
\Nsddata {\kk} {\phantom a\nabla}{\phantom a\pi}
{\HH(\kk)}h 
\label{6.1}
\end{equation}
be a contraction,
write $\vv = s\HH(\kk)$
and, with an abuse of notation, let
\begin{equation}
\Nsddata {V} {\phantom a\nabla}{\phantom a\pi}{\vv}h 
\label{6.2}
\end{equation}
be the induced contraction.
Thus
$\vv_0 = \ker(d)$ and
$\vv_{-1} = \mathrm {coker}(d)$, 
and
$A_0=hdV_0$ 
is a complement 
of
$\vv_0$ in $V_0$
and 
$B_{-1}=dV_0$ one of
$\vv_{-1}$ in $V_{-1}$ such that
the restriction of $d$ to $A_0$ is a linear isomorphism
from $A_0$ onto $B_{-1}$ whose inverse is given by $h$. 
Further, the composite
$hd$ is the projection from $V_0$ onto $A_0$, and
the composite
$dh$ is the projection from $V_{-1}$ onto $B_{-1}$.

The graded Lie bracket of $\kk$ corresponds to a homogeneous 
quadratic map
$q \colon V_0 \to V_{-1}$
and is in fact determined by such a map.
Here $q$ being homogeneous quadratic means that
there is a linear map $Q \colon 
\Sigmc_2[V_0] \to V_{-1}$
such that $q$ is the composite
\begin{equation*}
V_0 \longrightarrow \Sigmc_2[V_0] \stackrel{Q} \longrightarrow V_{-1},
\end{equation*}
the unlabelled arrow being the canonical map from
$V_0$ into its second symmetric copower
$\Sigmc_2[V_0]$.
With a slight abuse of notation,
we write the resulting map
from $\Sigmc[V_0]$ to $V_{-1}$
which arises the composite
with the projection
from $\Sigmc[V_0]$ to 
$\Sigmc_2[V_0]$
still as
\begin{equation*}
Q \colon 
\Sigmc[V_0] \stackrel{\mathrm{proj}}
\longrightarrow \Sigmc_2[V_0] \stackrel{Q} \longrightarrow V_{-1}.
\end{equation*}
Relative to the decomposition
$V_{-1} =B_{-1} \oplus \vv_{-1}$,
the quadratic map $q$ and the linear map $Q$ have the form
\begin{equation} q=(q_B,q_\vv) \colon V_0 \longrightarrow V_{-1},\quad 
Q =(Q_B,Q_\vv)\colon \Sigmc[V_0] \longrightarrow V_{-1}.
\end{equation}
With reference to a basis $b_1,b_2,\dots$ of
$V_0$ and corresponding coordinate functions
$z_j$ determined by $\langle z_j,b_k\rangle = \delta_{j,k}$,
we can write the quadratic map $q$ in the form
\begin{equation*}
q(z_{k_1} b_{k_1} + \dots + z_{k_n} b_{k_n}) 
= \sum a_{k_j} z_{k_j}^2  + \sum a_{\alpha,\beta} z_{k_\alpha} z_{k_\beta},
\end{equation*}
for suitable coefficients
$a_{k_j},a_{\alpha,\beta} \in V_{-1}$.

The differential $d$ on $V$
induces a coalgebra differential 
on the graded symmetric
coalgebra 
$\Sigmc[V]$ and, with an abuse of notation,
we write this differential as $d$.
As a graded coalgebra,
$\Sigmc[V] = \Sigmc[V_0] \otimes \Lc[V_{-1}]$,
the tensor product of 
the
ordinary symmetric coalgebra $\Sigmc[V_0]$
on the degree zero constituent $V_0$ of $V$ 
with 
the
ordinary exterior coalgebra $\Lc[V_{-1}]$
on the degree $-1$ constituent $V_{-1}$ of $V$.
We note that, by construction, the coalgebra
$\Sigmc[V_0]$ is concentrated in homological degree zero.
The contraction \eqref{6.2}
induces
a coalgebra contraction
\begin{equation}
\Nsddata {(\Sigmc[V],d)} {\phantom{a}\Sigmc[\nabla]\phantom{a}}
{\phantom{a}\Sigmc[\pi]\phantom{a}}
{\Sigmc[\vv]}{\Sigmc[h]}; 
\label{6.3}
\end{equation}
here the induced morphisms 
$\Sigmc[\nabla]$ and $\Sigmc[\pi]$
are morphisms of differential graded coalgebras,
the graded symmetric
coalgebra 
$\Sigmc[\vv]$
has zero differential,
and the operator
$\Sigmc[h]$
arises from the corresponding operator
$\TTTc[h]$
on the differential graded tensor coalgebra
$\TTTc[V]$
by symmetrization.
In particular,
$\Sigmc[\nabla]$
induces an isomorphism
$\Sigmc[\vv] \to \HH_*(\Sigmc[V],d)$
of coalgebras 
or, equivalently,
$\Sigmc[\pi]$
induces an isomorphism
$\HH_*(\Sigmc[V],d)\to \Sigmc[\vv]$
of coalgebras.

Requiring that the diagram
\begin{equation*}
\CD
\Sigmc[V_0]
@>{\partial}>> \Sigmc[V]
\\
@V{Q}VV
@VV{\mathrm {can proj}}V
\\
V_{-1} @>{\mathrm {inclusion}}>> V
\endCD
\end{equation*}
be commutative
determines a coalgebra perturbation
\begin{equation}
\partial \colon \Sigmc[V] \longrightarrow \Sigmc[V]
\label{6.4}
\end{equation} 
of the coalgebra differential $d$ on $\Sigmc[V]$,
and the perturbed differential graded coalgebra
$(\Sigmc[V],d+\partial)$
yields 
the classifying coalgebra $\mathcal C[\kk]$
of $\kk$.

In the same vein, the components $Q_B$ and $Q_\vv$ 
of $Q$ determine
coalgebra perturbations
$\partial_B, \partial_\vv \colon \Sigmc[V] \to \Sigmc[V]$
of the coalgebra differential $d$ on  $\Sigmc[V]$,
and $\partial= \partial_B+ \partial_\vv$.
Let 
$\kk_{\mathrm a}$ be the 
differential graded Lie algebra
having the same underlying  chain complex as $\kk$
but having Lie bracket determined by
$Q_B$ only, that is,
the component 
into $\HHH_{-2}= \nabla(\HH(\kk))= s^{-1}(\vv)$
of the graded
bracket is zero.
Equivalently,
requiring that
the classifying coalgebra $\mathcal C[\kk_{\mathrm a}]$
of $\kk_{\mathrm a}$ be
$(\Sigmc[V],d+\partial_B)$ 
determines $\kk_{\mathrm a}$.

\subsection{Applying the universal solution of the deformation equation}

Let $\DDD\colon \Sigmc[\vv] \to \Sigmc[\vv]$
denote the coalgebra differential
\eqref{3.1.5}
on $\Sigmc[\vv]$ 
which results 
from applying the construction in Proposition \ref{prop1}(i) 
to the contraction 
\eqref{6.1} and perturbation $\partial= \partial_B+ \partial_\vv$.

\begin{prop}
\label{6.5}
\begin{enumerate}
\item[{\rm (i)}]
Applying
the construction in Proposition {\rm \ref{prop1}}{{\rm (i)}}
to the contraction {\rm \eqref{6.1}\/} and
perturbation
$\partial_B$ 
of the coalgebra differential $d$ of $\Sigmc[V]$
yields the zero coalgebra differential
on $\Sigmc[\vv]$
and,
furthermore,
a twisting cochain
\begin{equation}
\tau\colon \Sigmc[\vv] \longrightarrow \kk_{\mathrm a} .
\label{6.5.1}
\end{equation}

\item[{\rm (ii)}]
As a morphism of degree $-1$ from
$\Sigmc[\vv]$ to $\kk_{\mathrm a} = \kk$
(viewed as a graded vector space),
{\rm \eqref{6.5.1}}
coincides
with the twisting cochain
\begin{equation}
\Sigm_{\DDD}^{\mathrm {c}}[\vv] \longrightarrow \kk
\label{6.5.2}
\end{equation}
resulting from
applying
the construction in Proposition  {\rm \ref{prop1}} {{\rm (i)}}
to the contraction {\rm \eqref{6.1}\/} and
perturbation
$\partial= \partial_B+ \partial_\vv$.
\end{enumerate}
\end{prop}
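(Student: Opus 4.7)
The plan is to carry out an induction on $\ell$ using the recursive formulas \eqref{3.1.4} and \eqref{3.1.6}, exploiting two elementary properties of the contraction \eqref{6.1}: (a) $h$ vanishes on $\HHH = \nabla\HH(\kk)$, and (b) $\pi$ vanishes on $d\kk$. The structural observation driving both parts is that, by the very definition of $\kk_{\mathrm a}$, the brackets in $\kk$ and in $\kk_{\mathrm a}$ agree modulo a term supplied by $Q_\vv$ whose image lies in $\HHH_{-2}$.

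For part (i), observe that every bracket computed in $\kk_{\mathrm a}$ takes its values in $d\kk_{-1}$, a space of boundaries. Since $\pi$ annihilates boundaries, the right-hand side of \eqref{3.1.6} vanishes for every $\ell \geq 2$ in the $\kk_{\mathrm a}$ case. A coderivation of the cofree symmetric coalgebra $\Sigmc[\vv]$ is determined by its composition with the cogenerating projection, and hence by its composition with $\tau_{\HH(\kk)}$, so this forces $\DDD^{\ell-1}_{\mathrm a} = 0$ for every $\ell \geq 2$. The case $\ell = 2$ records, consistently, that the induced bracket on $\HH(\kk_{\mathrm a})$ vanishes, so that the Cartan-Chevalley-Eilenberg differential $\DDD^1$ of $\HH(\kk_{\mathrm a})$ is also zero. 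Consequently $\DDD_{\mathrm a} = 0$, and Proposition \ref{prop1}(i) produces the twisting cochain \eqref{6.5.1}.

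For part (ii), the plan is to show by induction on $\ell$ that the components $\tau^\ell$ furnished by \eqref{3.1.4} coincide for $\kk$ and $\kk_{\mathrm a}$ as graded linear maps $\Sigmc_\ell[\vv] \to \kk$. The base $\ell = 1$ is tautological since $\tau^1 = \nabla \tau_{\HH(\kk)}$ in both situations. For the inductive step, if $\tau^j_\kk = \tau^j_{\kk_{\mathrm a}}$ for all $j < \ell$, then for each decomposition $j + k = \ell$ the difference $[\tau^j,\tau^k]_\kk - [\tau^j,\tau^k]_{\kk_{\mathrm a}}$ is furnished entirely by the $Q_\vv$-component of the bracket and therefore takes its values in $\HHH_{-2}$. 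Since $h$ vanishes on $\HHH$, applying $h$ erases this discrepancy, so $h[\tau^j,\tau^k]_\kk = h[\tau^j,\tau^k]_{\kk_{\mathrm a}}$; the recursion \eqref{3.1.4} then yields $\tau^\ell_\kk = \tau^\ell_{\kk_{\mathrm a}}$, and summing over $\ell$ gives the asserted equality. The point to monitor carefully is that the two sources $\Sigmc_{\DDD}[\vv]$ and $\Sigmc[\vv]$ (with $\DDD = 0$) carry genuinely different coalgebra differentials, so (ii) is only an equality of graded linear maps of degree $-1$, not of twisting cochains; once this distinction is kept in view, the whole argument reduces to the twin cancellations $\pi|_{d\kk} = 0$ and $h|_{\HHH} = 0$.
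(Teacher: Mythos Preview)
Your proof is correct and follows essentially the same approach as the paper's own proof. The paper's argument is much terser---it simply notes that \eqref{3.1.6} yields zero because the brackets of $\kk_{\mathrm a}$ are boundaries, and that \eqref{3.1.4} yields the same $\tau^\ell$ for $\kk$ and $\kk_{\mathrm a}$ because ``the homotopy operators \ldots\ are the same''---but your explicit induction, together with the observations $\pi|_{d\kk}=0$ and $h|_{\HHH}=0$, is exactly the unpacking of those two remarks.
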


\begin{proof}
This
follows
at once from the recursive description \eqref{3.1.6}
of the terms of the perturbed differential on
$\Sigmc[\vv]$
since the values of the graded Lie bracket of
$\kk_{\mathrm a}$ are boundaries
and from the 
recursive description \eqref{3.1.4}
of the two twisting cochains
under consideration
since
the homotopy operators on
$\kk$ and $\kk_{\mathrm a}$ 
(written as $h$)
are the same. 
\end{proof}

Thus perturbing the
coalgebra differential $d$ on $\Sigmc[V]$
by means of $\partial_B$
yields a degree $-1$ 
morphism $\tau\colon \Sigmc[\vv] \longrightarrow \kk$ 
that is, 
relative to the zero differential on 
$\Sigmc[\vv]$,
a twisting cochain with values in 
$\kk_{\mathrm a}$,
and perturbing further
by means of the operator $\partial_\vv$
yields the in general 
non-trivial coalgebra differential 
$\DDD$ on $\Sigmc[\vv]$
but does not change 
the twisting cochain $\tau$ in the sense that
$\tau$ is as well a twisting cochain 
$\Sigmc_\DDD[\vv] \longrightarrow \kk$. 
In particular,
the graded Lie algebra $\HH(\kk_{\mathrm a})$ is abelian.

\begin{prop}
\label{6.6}
The differential graded Lie algebra $\kk_{\mathrm a}$ 
is formal in the sense that it is
$L_\infty$-equivalent to its homology Lie algebra.
 Moreover, 
besides the adjoint
$\overline \tau\colon
\Sigmc[\vv]
\to
\Sigm_{d+\partial_B}^{\mathrm {c}}[V]
=\mathcal C[\kk_{\mathrm a}]
$ of $\tau$ 
being a morphism of differential graded coalgebras
inducing an isomorphism on homology,
\begin{equation}
\Sigmc[\pi]\colon
\mathcal C[\kk_{\mathrm a}]=\Sigm_{d+\partial_B}^{\mathrm {c}}[V]
\longrightarrow
\Sigmc[\vv]
\end{equation}
is a morphism of differential graded coalgebras as well 
which induces as isomorphism on homology,
$\Sigmc[\vv]$ being endowed with the zero differential
and hence being equal to its homology.
Furthermore,
$\Sigmc[\pi] \circ \overline \tau 
= \mathrm {Id}_{\Sigmc[\vv]}$.
Finally, the injection 
$\overline \tau$
is compatible with the perturbations $\DDD$ and $\partial$ as well,
that is to say,  is a morphism
\begin{equation}
\overline \tau\colon
\Sigm_{\DDD}^{\mathrm {c}}[\vv]
\longrightarrow
\Sigm_{d+\partial}^{\mathrm {c}}[V]
=\mathcal C[\kk]
\end{equation}
of differential graded coalgebras as well.
\end{prop}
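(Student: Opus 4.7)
The plan is to deduce all four assertions from Propositions~\ref{prop1} and~\ref{6.5} combined with the universal property of the symmetric coalgebra.

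For the formality of $\kk_{\mathrm a}$, I would apply Proposition~\ref{prop1}(iv). The graded bracket of $\kk_{\mathrm a}$ is encoded by $Q_B$, whose image lies in $B_{-1}=dV_0$, so every bracket $[\tau^j,\tau^k]$ takes values in boundaries. Since $\pi$ is a chain map to a complex with zero differential, $\pi$ vanishes on boundaries, and hence $\pi[\tau^j,\tau^k]=0$ for all $j,k\geq 1$. Proposition~\ref{prop1}(iv) then yields that $\kk_{\mathrm a}$ is $L_\infty$-equivalent to its (abelian) homology Lie algebra and, consistently with Proposition~\ref{6.5}(i), that $\DDD^\ell=0$ for $\ell\geq 2$. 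The assertion that $\overline\tau$ is a morphism of coaugmented differential graded coalgebras inducing an isomorphism on homology is then immediate from Proposition~\ref{prop1}(ii) and~(iii).

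For $\Sigmc[\pi]$, I would note that it is already a morphism of differential graded coalgebras for the unperturbed differential $d$ on $\Sigmc[V]$ by the contraction~\eqref{6.3}, so it suffices to verify $\Sigmc[\pi]\circ\partial_B=0$. Since $\partial_B$ is a coderivation of $\Sigmc[V]$, the composite $\Sigmc[\pi]\circ\partial_B$ is an $f$-coderivation with $f=\Sigmc[\pi]$; by the universal property of $\Sigmc[\vv]$ it is determined by its composition with the cogenerating morphism of $\Sigmc[\vv]$, which equals $\pi\circ Q_B$. This vanishes because $Q_B$ lands in $B_{-1}$ and $\pi(B_{-1})=0$. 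Thus $\Sigmc[\pi]\colon \mathcal C[\kk_{\mathrm a}]\to \Sigmc[\vv]$ is a morphism of differential graded coalgebras. The identity $\Sigmc[\pi]\circ\overline\tau=\mathrm{Id}_{\Sigmc[\vv]}$ is verified by the same universal-property argument: both sides are coalgebra morphisms into $\Sigmc[\vv]$, and after composing with the cogenerating morphism they reduce, via~\eqref{twist33}, to $\pi\tau=\tau_{\HH(\kk)}$, which is precisely the cogenerating morphism of $\Sigmc[\vv]$ after the standard suspension identification. Because $\overline\tau$ is a homology isomorphism and $\Sigmc[\pi]\circ\overline\tau=\mathrm{Id}$, the map $\Sigmc[\pi]$ is also a homology isomorphism.

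The compatibility of $\overline\tau$ with the full perturbation $\partial=\partial_B+\partial_\vv$ is a direct consequence of Proposition~\ref{6.5}(ii): as a graded-linear map, $\tau$ coincides with the universal twisting cochain constructed from $\partial$, and hence, by Proposition~\ref{prop1}(ii), its adjoint $\overline\tau$ is a morphism $\Sigm^{\mathrm c}_{\DDD}[\vv]\to \mathcal C[\kk]=(\Sigmc[V],d+\partial)$ of differential graded coalgebras. The main obstacle I anticipate is the suspension and sign book-keeping required to identify $\pi\tau$ with the cogenerating morphism of $\Sigmc[\vv]$ in the calculation of $\Sigmc[\pi]\circ\overline\tau$; all other steps follow systematically from the universal solution of the deformation equation and the contraction~\eqref{6.3}.
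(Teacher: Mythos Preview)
Your proposal is correct and follows essentially the same route as the paper. The paper's proof compresses your separate treatments of formality and of $\Sigmc[\pi]$ into the single observation that, in the contraction \eqref{6.1} for $\kk_{\mathrm a}$, the projection $\pi$ is a strict morphism of differential graded Lie algebras (since the bracket of $\kk_{\mathrm a}$ takes values in $B_{-1}\subseteq\ker\pi$ and $\HH(\kk_{\mathrm a})$ is abelian); this yields both formality and the fact that $\Sigmc[\pi]=\CCC[\pi]$ is a morphism of differential graded coalgebras in one stroke, while your coderivation computation $\pi\circ Q_B=0$ is precisely the content of that compatibility. The identity $\Sigmc[\pi]\circ\overline\tau=\mathrm{Id}$ via the universal property and \eqref{twist33}, and the final compatibility with $\DDD$ and $\partial$ via Proposition~\ref{6.5}(ii), match the paper verbatim (the latter is dispatched there as ``inspection shows'').
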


\begin{proof}
In the corresponding
contraction 
$\Nsddata {\kk_{\mathrm a}} {\nabla}{\pi}
{\HH(\kk_{\mathrm a})}h$,
cf. \eqref{6.1}, the projection $\pi$ is compatible with the Lie brackets.
This implies the first two assertions.
Inspection shows that
$\overline \tau$
is compatible with the perturbations $\DDD$ and $\partial$ as well.
Furthermore,
the composite
\begin{equation*}
\tau_{\HH(\kk_{\mathrm a})} \circ \Sigmc[\pi]
\circ \overline \tau \colon
\Sigmc[\vv] \longrightarrow \HH(\kk_{\mathrm a})
\end{equation*}
equals
$\tau_{\HH(\kk_{\mathrm a})}$
whence
$\Sigmc[\pi]\circ \overline \tau$
is the identity of
$\Sigmc[\vv]$
and thence
$\overline \tau$ is a section for
$\Sigmc[\pi]$. 
In particular, the corresponding contraction \eqref{3.1.18}
for
$\kk_{\mathrm a}$
has the form
\begin{equation}
\Nsddata {\Sigm_{d+\partial_B}^{\mathrm {c}}[V]}
{\overline \tau}
{\phantom{a} \Sigmc[\pi] \phantom{a}}
{\Sigmc[\vv]}{h},  
\label{6.6.1}
\end{equation}
with the additional property that
$\Sigmc[\pi]$
is a morphism of differential graded coalgebras as well.
\end{proof}

\section{Formal inverse of the Kuranishi projection}
\label{fi}

As before, $\kk$ denotes a differential graded
$\fiel$-Lie algebra concentrated in degrees $-1$ and $-2$,
the notation $V$ refers to the suspension $s\kk$ of $\kk$
as a chain complex, and 
we argue in terms of a contraction of the kind \eqref {6.2}.
Thus
$\vv_0 = \ker(d)$,
$\vv_{-1} = \mathrm {coker}(d)$,
$A_0=hdV_0$, $B_{-1}=dV_0$, so that
\begin{equation*}
V_0= \vv_0 \oplus A_0, \
V_{-1} =B_{-1} \oplus\vv_{-1}
\end{equation*}
in such a way that
the restriction of $d$ to $A_0$ is a linear isomorphism
from $A_0$ onto $B_{-1}$, 
and the restriction of $h$ to $B_{-1}$ yields
the inverse thereof.
Further, the composite
$hd$ is the projection from $V_0$ onto $A_0$, and
the composite
$dh$ is the projection from $V_{-1}$ onto $B_{-1}$.

\subsection{Kuranishi map} \label{6.7}
Consider the map 
\begin{equation}
J=d+q =(d+q_B, q_\vv)\colon V_0 \longrightarrow 
V_{-1} = B_{-1} \times  \vv_{-1}.
\label{kur1}
\end{equation}
Let $\MMM_\kk \subseteq V_0$ be the 
non-singular $\fiel$-variety that consists 
of all $x=(x_1,x_2) \in V_0= \vv_0 \oplus A_0$
satisfying the equation $x_2 + h(q_B(x))=0$.
The origin $0$ of  $V_0$  lies in $\MMM_\kk$. 
Since, for $x=(x_1,x_2) \in V_0= \vv_0 \oplus A_0$,
\begin{equation*}
(d+q_B)(x_1,x_2)= d(x_2) +q_B(x_1,x_2) \in B_{-1}
\end{equation*}
is zero if and only if
\[
 hd(x_2) +hq_B(x_1,x_2) =  x_2 +hq_B(x_1,x_2)
\]
 is zero,
the zero locus
$\VVV_\kk=J^{-1}(0) \subseteq V_0$,
an $\fiel$-subvariety of
$V_0$, not necessarily non-singular,
lies in $\MMM_\kk$
as the subvariety
\begin{equation}
\VVV_\kk =(q_\vv|_{\MMM_\kk})^{-1}(0) \subseteq \MMM_\kk .
\end{equation}

The expression 
\begin{equation}
F(x_1,x_2) = (x_1, x_2) + h (q_B(x_1,x_2)),\quad x_1\in \vv_0,\, x_2 \in A_0.
\label{6.7.1}
\end{equation}
characterizes an algebraic map $F \colon V_0  \to V_0$,
i.e., a morphism of $\fiel$-varieties,
the equation $x_2 = 0$ characterizes
the (algebraic) tangent space 
$\TTT_0\MMM_\kk$ to $\MMM_\kk$ at $0$
and, since the projection
$V_0 \to \vv_0$ is an algebraic map,
 the restriction of $F$
to $\MMM_\kk$ 
is the  projection from $\MMM_\kk$ onto the tangent space 
$\TTT_0\MMM_\kk=\vv_0\subseteq V_0$ to $\MMM_\kk$ at $0$,
necessarily an algebraic map.

We recall the classical situation \cite{MR0141139, MR0355111},
see also
\cite{MR606458, 
MR506229,  
MR1065894, 
MR1363857, MR1369463, MR1376296}. Now
the base field $\fiel$ is that of the reals or that of the complex numbers,
$\MMM_\kk$
is a smooth manifold, and $F$ is the ordinary
{\em Kuranishi map\/}.
In an open neighborhood $\mathcal N$ of $0$
in the tangent space $\TTT_0\MMM_\kk=\vv_0$,
the inverse $\iota \colon \mathcal N \to\MMM_\kk$
of the projection to the tangent space at the origin
then exists as an analytic map and
parametrizes a neighborhood $\mathcal U$
of  $0$ in $\MMM_\kk$ in the sense that
$\iota \colon \mathcal N \to\mathcal U$
is an analytic isomorphism
onto $\mathcal U \subseteq \MMM_\kk$.
With the notation
$\KKK_\kk =
\mathrm{pr}(\mathcal U\cap J^{-1}(0)) \subseteq \vv_0$ 
for the image
in $\vv_0$ 
of the intersection $\mathcal U\cap J^{-1}(0) \subseteq \MMM_\kk$,
the restriction of
$\iota$ yields, furthermore, a homeomorphism
\begin{equation}
\iota|_{\KKK_\kk} \colon \KKK_\kk \longrightarrow
\VVV_\kk ,
\label{homeo}
\end{equation}
indeed, an isomorphism of (germs of) analytic spaces.
The reader can find more details on analytic spaces in
 \cite{MR0217337}, \cite{MR755331} (where the terminology is 
\lq\lq complex spaces\rq\rq) and in the references there. 
Extending terminology in 
\cite{MR1046568},
we refer to $\KKK_\kk$ 
as the {\em Kuranishi space\/}
associated with the data.
In the situation of the Kodaira-Spencer Lie algebra,
see Example \ref{KS} above, 
the main result of \cite{MR0141139}, see also
\cite{MR0355111}, says that
the Kuranishi space is the base for the miniversal deformation
of the complex structure
of the underlying smooth real manifold.

\subsection{Formal inverse}
Now we show that, 
over a general field $\fiel$ of characteristic zero,
for a general differential graded $\fiel$-Lie algebra of 
the kind $\kk$ under discussion, 
the inverse of the projection from the non-singular $\fiel$-variety
$\MMM_\kk$
to its tangent space 
at the origin
still
makes sense on the formal level.

The Kuranishi map is
manifestly an algebraic map 
and hence a formal map from  $V_0$ to $V_0$.
Consequently the projection from $\MMM_\kk$
to its tangent space 
at the origin
is an algebraic map 
and hence a formal map.

The homology coalgebra
$\HH_0(\Sigmc_{d+\partial_B}[V])\subseteq \Sigmc[V_0]$
is canonically isomorphic to the coordinate coalgebra
$C[\MMM_\kk]$ of the non-singular
$\fiel$-variety
$\MMM_\kk \subseteq V_0$,
and the restriction
of the projection 
$\Sigmc[\pi]\colon
\Sigmc[V_0]
\longrightarrow
\Sigmc[\vv_0]$ to
$C[\MMM_\kk]\subseteq \Sigmc[V_0]$, that is, the composite
\begin{equation}
\Sigmc[\pi]_*\colon C[\MMM_\kk]\subseteq \Sigmc[V_0]\stackrel{\Sigmc[\pi]}\longrightarrow
\Sigmc[\vv_0],
\label{pro}
\end{equation}
is the formal map associated with the projection from
$\MMM_\kk$ to its tangent space $\vv_0$ at the origin.
This composite is indeed the morphism of coalgebras
that the morphism $\Sigmc[\pi]$ of differential graded coalgebras
induces on degree zero homology,
whence the notation $\Sigmc[\pi]_*$.
Likewise,
the homology coalgebra
$\HH_0(\Sigmc_{d+\partial}[V])\subseteq \Sigmc[V_0]$
is canonically isomorphic to the coordinate coalgebra
$C[\VVV_\kk]$ of the
$\fiel$-variety
$\VVV_\kk \subseteq\MMM_\kk \subseteq V_0$ in such a way that
the 
injection
$C[\VVV_\kk]\subseteq \Sigmc[V_0]$
of the coordinate coalgebra $C[\VVV_\kk]$
of $\VVV_\kk$ into $\Sigmc[V_0]$
factors as
$C[\VVV_\kk]\subseteq C[\MMM_\kk]\subseteq \Sigmc[V_0]$.
We define the {\em Kuranishi coalgebra\/}
associated with $\kk$ and the contraction \eqref{6.1}
to be the $\fiel$-coalgebra
$C_\kk= \HH_0(\Sigmc_{\DDD}[\vv])\subseteq \Sigmc[\vv_0]$.
The Kuranishi coalgebra
associated with $\kk$ and the contraction \eqref{6.1}
is the formal analogue of the Kuranishi space $\KKK_\kk$,
cf. Subsection \ref{6.7} above.

\begin{thm}
\label {6.7.2}The constituent
\begin{equation}
\overline \tau\colon
\Sigmc[\vv_0]
\longrightarrow
C[\MMM_\kk]
\subseteq \Sigmc[V_0]
\label{adj}
\end{equation}
of the adjoint $\overline \tau$
of the twisting cochain 
$\tau \colon \Sigmc[\vv]
\to \kk$, cf.
\eqref{6.5.1},
is the 
inverse of \eqref{pro}, and
\eqref{adj} and \eqref{pro}
are mutually inverse isomorphisms
of coalgebras.
In other words:
The
morphism
{\rm \eqref{adj}} of coalgebras
is the formal inverse 
of the projection from
$\MMM_\kk$ to its tangent space $\vv_0$ at the origin.
This formal inverse restricts to an isomorphism
\begin{equation}
\overline \tau\colon
C_\kk
\longrightarrow
C[\VVV_\kk]
\subseteq \Sigmc[V_0]
\label{adj2}
\end{equation}
of $\fiel$-coalgebras.
\end{thm}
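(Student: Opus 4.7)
The plan is to leverage the contraction \eqref{6.6.1} from Proposition \ref{6.6} together with one degree observation: since $V$ is concentrated in degrees $0$ and $-1$, the coalgebra $\Sigmc[V]$ lives in non-positive homological degrees, so the degree $+1$ operator $\Sigmc[h]$ vanishes identically on $\Sigmc[V_0]=\Sigmc[V]_0$. A consequence is that the three degree $0$ homologies in question are genuine subspaces of $\Sigmc[V_0]$ (resp.\ of $\Sigmc[\vv_0]$), namely $C[\MMM_\kk]=\ker((d+\partial_B)|_{\Sigmc[V_0]})$, $C[\VVV_\kk]=\ker((d+\partial)|_{\Sigmc[V_0]})$, and $C_\kk=\ker(\DDD|_{\Sigmc[\vv_0]})$. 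This degree vanishing is really the only non-trivial ingredient beyond bookkeeping; once it is registered, the contraction collapses on degree $0$ cycles into a literal mutual inverse pair, and everything else is assembly of facts already in hand.

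For the first assertion, Proposition \ref{6.5}(i) says that perturbing by $\partial_B$ alone yields the zero differential on $\Sigmc[\vv]$, and Proposition \ref{6.6} says that $\overline{\tau}\colon\Sigmc[\vv]\to\Sigmc_{d+\partial_B}[V]$ is a morphism of differential graded coalgebras satisfying $\Sigmc[\pi]\circ\overline{\tau}=\mathrm{Id}$. In degree $0$ every element of $\Sigmc[\vv_0]$ is trivially a cycle, hence $\overline{\tau}$ sends it to a degree $0$ cycle of $\Sigmc_{d+\partial_B}[V]$, that is, to an element of $C[\MMM_\kk]$; this shows that \eqref{adj} is a well defined coalgebra morphism and that $\Sigmc[\pi]_*\circ\overline{\tau}=\mathrm{Id}_{\Sigmc[\vv_0]}$. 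For the reverse composition the contraction identity reads
\begin{equation*}
\mathrm{Id}-\overline{\tau}\,\Sigmc[\pi]=(d+\partial_B)\Sigmc[h]+\Sigmc[h](d+\partial_B),
\end{equation*}
and evaluated at $y\in C[\MMM_\kk]\subseteq\Sigmc[V_0]$ both summands vanish: the first because $\Sigmc[h](y)=0$ by the degree observation, the second because $(d+\partial_B)(y)=0$. Thus $\overline{\tau}\circ\Sigmc[\pi]_*=\mathrm{Id}_{C[\MMM_\kk]}$, and the two maps are mutually inverse isomorphisms of coalgebras.

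For the restriction \eqref{adj2}, I invoke the final clause of Proposition \ref{6.6}: $\overline{\tau}\colon\Sigmc_\DDD[\vv]\to\Sigmc_{d+\partial}[V]=\mathcal{C}[\kk]$ is also a morphism of differential graded coalgebras, so its degree $0$ restriction sends $C_\kk$ into $C[\VVV_\kk]$. For surjectivity, take $y\in C[\VVV_\kk]\subseteq C[\MMM_\kk]$ and set $x=\Sigmc[\pi](y)\in\Sigmc[\vv_0]$; the first half of the theorem already gives $\overline{\tau}(x)=y$. Applying $\overline{\tau}$ to $\DDD(x)$ yields $(d+\partial)\overline{\tau}(x)=(d+\partial)(y)=0$, and injectivity of $\overline{\tau}$ (a consequence of $\Sigmc[\pi]\overline{\tau}=\mathrm{Id}$) forces $\DDD(x)=0$, i.e.\ $x\in C_\kk$. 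Hence \eqref{adj2} is an isomorphism of coalgebras, and the proof is complete.
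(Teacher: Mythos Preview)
Your proof is correct and follows precisely the route the paper indicates: the paper's own argument is the single sentence ``This results from the contraction \eqref{6.6.1} or, equivalently, from Proposition \ref{6.6},'' and what you have written is exactly the unpacking of that sentence. One small notational point: the homotopy in the contraction \eqref{6.6.1} is written in the paper as $h$ (the perturbed homotopy coming from \eqref{3.1.18}), not literally $\Sigmc[h]$; but since your argument only uses that this operator has degree $+1$ and hence vanishes on $\Sigmc[V]_0$, the reasoning goes through unchanged.
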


\begin{proof}
This results form the contraction 
\eqref{6.6.1} or, equivalently, from Proposition \ref{6.6}.
\end{proof}

The $\fiel$-coalgebra isomorphism 
\eqref{adj2}
is the formal analogue of \eqref{homeo} above.
We shall explain elsewhere
how we can interpret the universal solution  $\tau$ 
of the deformation equation
as
a {\em formal miniversal deformation\/}.

\subsection{Heuristic interpretation}
\label{heuristic}
We can somewhat loosely interpret the definition 
of the Kuranishi coalgebra
by saying that
the Kuranishi coalgebra  $C_\kk$ of $\kk$
defines a {\em formal\/} $\fiel$-variety 
$\KKK_\kk \subseteq \vv_0$
having $C_\kk$ as its coordinate coalgebra $C[\KKK_\kk]$.
Indeed, in a sense,
the operator $\DDD$
yields equations defining
the 
formal Kuranishi space
$\KKK_{\kk}$
as a formal variety in $\vv_0$. 

Somewhat more explicitly: 
Suppose that $\vv_0$ and
$\vv_{-1}$ are finite-dimensional,
and let $m = \dim \vv_{-1}$.
The graded symmetric coalgebra
$\Sigmc[\vv]$ has
\begin{equation}
(\Sigmc[\vv])_{-j} = \Sigmc[\vv_0] \otimes
\Lambda^j [\vv_{-1}],
\quad 0 \leq j\leq m,
\label{6.7.3}
\end{equation}
whence
$(\Sigmc[\vv])_0 = \Sigmc[\vv_0]$
and
$(\Sigmc[\vv])_{-1} 
= \Sigmc[\vv_0] \otimes \vv_{-1}$.
Choose  an $\fiel$-basis $b_1,\dots,b_n$ of
$\vv_0$ and 
write $z_j$ for the
corresponding coordinate function on $V_0$
determined by $\langle z_j,b_k\rangle = \delta_{j,k}$ ($1 \leq j,k \leq n$).
As noted in 
Subsection \ref{formalfunc} above, the 
formal power series 
in the variables
$z_1,\dots,z_n$
correspond bijectively to the formal functions on $\vv_0$.
Since
$\pi \overline \tau = \mathrm {Id}$,
the adjoint $\overline \tau
\colon \Sigmc[\vv]
\to  \Sigmc[V]$ 
of $\tau$
is injective.

By construction, the restriction, to
$\Sigmc[\vv_0]$, of the
composite $s \circ \tau \colon \Sigmc[\vv]
\stackrel{\tau}\to  \kk \stackrel{s}\to V$
then takes the form
\begin{equation}
(s \circ \tau)(z_1 b_1 + \dots + z_n b_n) 
= \sum a_{\mathbf j} z_1^{j_1} \dots z_n^{j_n},
\label{6.7.4}
\end{equation}
the right-hand side
of this equation being
a formal power series
having coefficients
$a_{\mathbf j} \in V_0$.
After a choice of basis $\beta_1,\dots,\beta_m$ of $\vv_{-1}$,
the 
composite 
\begin{equation}
\pi_{\DDD}\colon (\Sigmc[\vv])_0 = \Sigmc[\vv_0]
\longrightarrow \vv_{-1}
\end{equation}
of the operator 
\begin{equation}
\DDD\colon
(\Sigmc[\vv])_0 = \Sigmc[\vv_0]
\longrightarrow
(\Sigmc[\vv])_{-1} = \Sigmc[\vv_0] \otimes
\vv_{-1}
\end{equation} 
with the projection to
$\vv_{-1}$
amounts to
$m$ formal power series 
$\Phi_1,\dots,\Phi_m$
in the variables 
$z_1,\dots,z_n$.
As an $\Sigmc[\vv_0]$-comodule,
the
Kuranishi coalgebra 
$C_\kk=\HH_0(\Sigm_{\DDD}^{\mathrm {c}}[\vv])$
is isomorphic to the kernel of the map
\begin{equation}
\xymatrixcolsep{4pc}
\xymatrix{
\Sigmc[\vv_0] \ar[r]^{\Delta\phantom{aaaa}} 
&\Sigmc[\vv_0] \otimes \Sigmc[\vv_0]
\ar[r]^{\phantom{aa}\Sigmc[\vv_0] \otimes \pi_\DDD} 
&
 \Sigmc[\vv_0] \otimes \vv_1 .
}
\end{equation}

\subsection{Classical case}
\label{6.7.5} Over $\fiel = \RR$
or $\fiel = \mathbb C$,
under appropriate circumstances,
by the Banach implicit function theorem,
in a neighborhood of zero,
the formal power series \eqref{6.7.4} converges
and yields the inverse 
of the 
projection from $\MMM_\kk$
to its tangent space
near $0$; this inverse is
a map
from a neighborhood of $0$ in
$\vv_0$ to a neighborhood of $0$ in $\MMM_\kk$
in the usual sense.
This observation applies, e.g., to the Kodaira-Spencer algebra, cf. 
Example \ref{KS} above.

\subsection{Illustration} 
\label{6.7.6} Consider the  
plane $\fiel^2$ with coordinates $v$ and $u$.
Let $\kk$ be the differential graded Lie algebra
concentrated in degrees $-1$ and $-2$
having $\kk_{-1}= \fiel^2$ and $\kk_{-2}= \fiel$,
with differential $d\colon \kk_{-1}\to \kk_{-2}$
the projection to the second copy of $\fiel$ in $\kk_{-1}$,
that is, $d(v,u)=u$,
and with Lie bracket
\begin{equation}
\bra \colon \kk_{-1} \otimes \kk_{-1} \longrightarrow  \kk_{-2}= \fiel,
\ 
[(v_1,u_1),(v_2,u_2)]= \tfrac 12 (u_1u_2 +v_1v_2).
\end{equation}
For simplicity, we now identify in notation
$\kk$ and its suspension $V$.
Then the identity
$q_B(v,u) = u^2 + v^2$ characterizes the
 corresponding quadratic map 
\[
q_B\colon \kk_{-1}=V_0 = \fiel^2 \longrightarrow 
V_{-1} = B_{-1} = \kk_{-2} = \fiel.
\] 
Thus $\vv_0 = \ker(d)$ is the $v$-axis,
$\vv_{-1} = \mathrm {coker}(d)$ is trivial,
and the map \eqref{kur1} above comes down to the
real function $J=d+q_B$
on $V_0= \fiel^2$ given by
$J(v,u) = u + u^2 + v^2$.

The identity $h(u)=(0,u)$ characterizes
the corresponding  homotopy $h \colon V_{-1} =B_{-1} \to V_0$,
the constituent of $h$
on $V_0$ being zero for degree reasons.
The non-singular variety (smooth manifold when $\fiel = \RR$) 
$\MMM_\kk$ given by the equation $J(u,v)=0$ 
is then the circle
\begin{equation}
\left(u+\frac 12\right)^2 + v^2 = \frac 14 ,
\end{equation}
the tangent space
$\TTT_0\MMM_\kk=\vv_0$
to $\MMM_\kk$ at the origin
is the $v$-axis, and
the Kuranishi map \eqref{6.7.1} now has the form
\begin{equation}
F \colon \fiel^2 \longrightarrow \fiel^2,\ F(v, u)=(v,u) +
h(q_B(v,u))=(v,u+ u^2 + v^2).
\end{equation}
Visibly, the restriction of $F$ to $\MMM_\kk$ is the projection
to  the $v$-axis, i.e., to 
the tangent space
$\TTT_0\MMM_\kk=\vv_0$
to $\MMM_\kk$ at the origin.
Over $\fiel = \RR$,
the classical recursion for
the non-zero Taylor coefficients $a_2$, $a_4$, etc., of
$u$ as 
an analytic function
of the variable $v$ close to the origin of $V_0= \RR^2$
reads
\begin{equation}
\begin{aligned}
u&=a_2v^2 +a_4 v^4 + a_6 v^6 + \ldots
\\
a_4 v^4 + a_6 v^6 + \ldots +(a_2v^2 +a_4 v^4 + a_6 v^6 + \ldots)^2 &=0
\\
a_2+1&=0
\\
a_4+a^2_2&=0
\\
a_6+2a_2a_4&=0
\\
a_8 +2 a_2a_6 + a_4^2 &= 0,
\end{aligned}
\label{recurs}
\end{equation}
etc., this recursion
yields the 
familiar expression 
\begin{equation}
u= \sqrt{\frac 14-v^2} - \frac 12=
- v^2 - v^4  - 2 v^6 -\dots , 
\label{analf}
\end{equation}
and the assignment to $v$ of $(v,u(v))$
parametrizes
 the circle under discussion
near the origin of $V_0$.

The power series on the right-hand side of \eqref{analf},
viewed as a formal power series,
is a special case of \eqref{6.7.4} above.
Examining 
how the homological perturbation theory construction
of the twisting cochain \eqref{ltc} 
leads to that power series 
is instructive:
Write $b$ for the standard basis element
$b=(1,0)$ of the $v$-axis $\vv_0$. In terms of this notation,
$\HH_{-1}(\kk)= \fiel \langle s^{-1} (b) \rangle$,
the $1$-dimensional $\fiel$-vector space
having the desuspension $s^{-1}(b)$ of $b$ as its basis.
By construction
\begin{align*}
\tau_{\HH(\kk)}(b)&= s^{-1}(b),
\\
s  \tau_1(b)\,(\,= s \nabla \tau_{\HH(\kk)}(b)) &= b = (1,0) \in V_0,
\end{align*}
that is, the composite $s\circ \tau_1 \colon \vv_0 \to V_0$
coincides with the corresponding injection
of the kind
$\nabla \colon \vv_0 \to V_0$ in the contraction \eqref{6.2}.
The recursion \eqref{3.1.4} translates to the recursion
\eqref{recurs}
for the non-zero Taylor coefficients $a_2$, $a_4$, etc. of $u$ as an analytic 
function of the variable $v$.
The recursion \eqref{3.1.4} entails that
the non-zero Taylor coefficients $a_2$, $a_4$, etc. of $u$
are members of $\fiel$ (rather than of a suitable extension field).

\begin{rema}
{\rm The field $\fiel(v)$ of rational functions in the variable
$v$ is the function field of the $v$-axis, viewed 
as an affine $\fiel$-variety,
the field $\fiel(\MMM_\kk)$
of rational functions on $\MMM_\kk$
is the  Galois extension of degree $2$ that arises by
adjoining, to $\fiel(v)$, the roots $u$ of the equation $x^2+v^2+x=0$, 
and the projection $\MMM_\kk \to \vv_0$ corresponds to the field extension
$\fiel(v) \subseteq \fiel(\MMM_\kk)$.
This kind of observation is, of course, valid under the general 
circumstances in Subsection \ref {6.7} above:
The projection $\MMM_\kk \to \vv_0$
determines a Galois field extension
$\fiel(\vv_0) \subseteq \fiel(\MMM_\kk)$
for the associated fields
of rational functions.
This observation implies that the formal
Kuranishi map 
cannot yield an ordinary morphism of
$\fiel$-varieties
 that is globally defined.
It remains to be seen to what extent this 
is relevant for deformation theory.
}
\end{rema}

\section*{Appendix}
Let $U$ be an $\fiel$-vector space. 

\subsection*{Symmetric coalgebra}
For $j \geq 2$,  
the  $j$-th symmetric copower 
 $\Sigmc_{j}[U] \subseteq U^{\otimes j}$
of $U$ is
the linear $\fiel$-subspace
of invariants under the canonical action,
on $U^{\otimes j}$,
of the symmetric group on $j$ letters.

The graded
symmetric $\fiel$-coalgebra 
cogenerated by $U$
is the graded coaugmented subcoalgebra of the graded $\fiel$-tensor coalgebra
$\{ U^{\otimes j}\}_{j \in \mathbb N}$ cogenerated by $U$
having  $\fiel$ as its homogeneous degree $0$
and $U$ as its homogeneous degree $1$
 constituents and,
for $j \geq 2$, as
 homogeneous degree $j$ constituent
the $j$-th symmetric copower
 $\Sigmc_{j}[U]$ of $U$.
The $\fiel$-vector space $U$
being concentrated in degree $1$,
the canonical projection from
$\{ \Sigmc_j[U]\}_{j \in \mathbb N}$ to $U$
yields the requisite cogenerating morphism
of graded $\fiel$-vector spaces.
Totalization yields
the symmetric $\fiel$-coalgebra  $\Sigmc[U]$
cogenerated by $U$.
For
$\ell \geq 0$, the coaugmentation filtration degree $\ell$  
constituent
$\Sigmc_{\leq \ell}[U] \subseteq \Sigmc[U]$
of $\Sigmc[U]$
has, for $j \leq \ell$, as its homogeneous constituents
the homogeneous  constituents $\Sigmc_{j}[U]$
of $\Sigmc[U]$.

Recall that the canonical diagonal map $U \to U \oplus U$
induces an $\fiel$-bialgebra structure on the symmetric 
$\fiel$-algebra $\Sigm[U]$ and
that multiplication by $-1$ on $U$
yields an antipode such that $\Sigm[U]$
acquires an  $\fiel$-Hopf algebra structure.
The canonical projection $\pi_U\colon \Sigm[U] \to U$
induces, via the universal property of
$\Sigmc[U]$, the canonical 
morphism 
$\Sigm[U] \to \Sigmc[U]$,
cf. \eqref{canh},
of graded $\fiel$-coalgebras.
This canonical morphism, viewed as an $\fiel$-linear map, coincides with 
the $\fiel$-linear map that underlies the canonical 
morphism 
$\Sigm[U] \to \Sigmc[U]$
of graded $\fiel$-algebras
associated with the canonical injection $U \to \Sigmc[U]$
and,
since the ground field $\fiel$ has characteristic zero,
this morphism is an isomorphim
of graded $\fiel$-Hopf algebras.

As in Subsection \ref{symmco} above,
choose an $\fiel$-basis $B$ of $U$.
The canonical morphism
${\Gamma[B]\to \Sigmc[U]}$ of $\fiel$-coalgebras
is an isomorphism, and the canonical map $\mathrm{can}\colon \fiel[B] \to \Gamma[B]$
renders the diagram
\begin{equation*}
\begin{CD}
\fiel[B] @>{\mathrm{can}}>>\Gamma[B]
\\
@VVV
@VVV
\\
\Sigm[U]
@>{\mathrm{can}}>>
\Sigmc[U]
\end{CD}
\end{equation*}
commutative.
The inverse of $\mathrm{can}\colon \fiel[B] \to \Gamma[B]$
sends $\gamma_j(b)$ to $\tfrac 1 {j!} b^j$ ($j \geq 1$)
as $b$ ranges over $B$ and thereby introduces divided powers in 
$\Sigm[U]$.

\begin{rema}\label{cofree}
{\rm
The
obvious forgetful functor from 
the category of general cocommutative
$\fiel$-coalgebras, i.e.,
cocommutative
$\fiel$-coalgebras not necessarily endowed with a coaugmentation,
to that of $\fiel$-vector spaces
admits likewise a right adjoint
\cite[p. 129 Theorem 6.4.3]{MR0252485}. Write this functor as
$\Sigma^{\mathrm c}$. Consider an $\fiel$-vector space $U$.
The $\fiel$-coalgebra $\Sigma^{\mathrm c}[U]$
comes with a canonical coaugmentation map,
and
$\Sigmc[U]$ 
is the largest cocomplete $\fiel$-subcoalgebra
of $\Sigma^{\mathrm c}[U]$ cogenerated by $U$,
but the two coalgebras do not coincide unless 
$U$ is the trivial vector space, cf.
\cite{MR782637, MR1992043}.
For example, when $U$ has dimension $1$ and when $z$ denotes the standard
coordinate function on $U$, the symmetric $\fiel$-coalgebra 
$\Sigmc[U]$ amounts to the coalgebra that underlies the
polynomial algebra $\fiel[z]$ (as noted before), whereas
$\Sigma^{\mathrm c}[U]$ 
comes down to the $\fiel$-coalgebra that underlies the
algebra of rational functions in the variable $z$ which are 
regular at the origin.
For general $U$, in \cite{MR0252485}, see also
\cite[p. 26, p. 51]{MR594993}, \cite[Section 3  Note p. 720]{MR2035107},
the $\fiel$-coalgebra $\Sigma^{\mathrm c}[U]$
is referred to as the symmetric coalgebra on $U$.
}
\end{rema}
\subsection*{Formal tangent space}
Let $C$ be  an $\fiel$-coalgebra $C$, and let $\Coder(C)$
denote the $\fiel$-vector space of coderivations of $C$,
endowed with the Lie bracket arising from the commutator 
of $\fiel$-linear endomorphisms of $C$.
More generally, for a  $C$-bicomodule $B$, we denote
by $\Coder(B,C)$
the $\fiel$-vector space of coderivations from $B$ to $C$.

A morphism $C \to \Sigmc[U]$
of $\fiel$-coalgebras induces an $\Sigmc[U]$-bicomodule structure
on $C$.
Recall that, in view of the universal property
 of the symmetric $\fiel$-coalgebra
$\Sigmc[U]$ on $U$, 
the cogenerating surjection
$\Sigmc[U] \to U$
induces an isomorphism
$\Coder(C,\Sigmc[U]) \to \Hom(C,U)$  of $\fiel$-vector spaces.
In particular,
the cogenerating surjection
$\Sigmc[U] \to U$ of $\fiel$-vector spaces
induces an  $\fiel$-vector space isomorphism
$\Coder(\Sigmc[U]) \to \Hom(\Sigmc[U],U)$.

Now, the coaugmentation map $\eta \colon \fiel \to 
 \Sigmc[U]$ turns the ground field $\fiel$ into an
$\Sigmc[U]$-bicomodule, and we use the notation
 $\fiel_\eta$ for this $\Sigmc[U]$-bicomodule.
The above observation entails that the cogenerating surjection
$\Sigmc[U] \to U$
induces an isomorphism
\[
\Coder(\fiel_\eta,\Sigmc[U]) \longrightarrow \Hom(\fiel,U) \cong U
\]  
of $\fiel$-vector spaces.
Thus the $\fiel$-vector space 
$\Coder(\fiel_\eta,\Sigmc[U])$ is canonically isomorphic to the
$\fiel$-vector space of primitives in $\Sigmc[U]$
relative to the canonical Hopf algebra structure of 
$\Sigmc[U]$. 
We refer to the space of primitives in $\Sigmc[U]$
as the {\em formal tangent space\/} to $U$ at $0$.
When $\fiel$ is the field of real or that of complex numbers,
this formal tangent space comes of course down to  the  ordinary 
tangent space to $U$ at $0$.

Let
$\fiel_\varepsilon$
denote the base field $\fiel$, endowed with
the 
$\Hom(\Sigmc[U],\fiel)$-module structure
induced by the augmentation map
$\varepsilon \colon \Hom(\Sigmc[U],\fiel) \to \fiel$.
The canonical map
\begin{equation}
\Coder(\fiel_\eta,\Sigmc[U]) \longrightarrow 
\Der(\Hom(\Sigmc[U],\fiel), \fiel_\varepsilon)
\label{can1}
\end{equation}
is an isomorphism.
Under \eqref{can1}, a basis element of $U$ 
goes to the associated operation of partial derivative.
This justifies the terminology
\lq\lq formal tangent space\rq\rq\  to $U$ at $0$.

\section*{Acknowledgement}

I am indebted to Jim Stasheff for a number of
most valuable comments on a draft of the paper.
I gratefully acknowledge support by the CNRS and by the
Labex CEMPI (ANR-11-LABX-0007-01).

\def\cprime{$'$} \def\cprime{$'$} \def\cprime{$'$} \def\cprime{$'$}
  \def\cprime{$'$} \def\cprime{$'$} \def\cprime{$'$} \def\cprime{$'$}
  \def\dbar{\leavevmode\hbox to 0pt{\hskip.2ex \accent"16\hss}d}
  \def\cprime{$'$} \def\cprime{$'$} \def\cprime{$'$} \def\cprime{$'$}
  \def\cprime{$'$} \def\Dbar{\leavevmode\lower.6ex\hbox to 0pt{\hskip-.23ex
  \accent"16\hss}D} \def\cftil#1{\ifmmode\setbox7\hbox{$\accent"5E#1$}\else
  \setbox7\hbox{\accent"5E#1}\penalty 10000\relax\fi\raise 1\ht7
  \hbox{\lower1.15ex\hbox to 1\wd7{\hss\accent"7E\hss}}\penalty 10000
  \hskip-1\wd7\penalty 10000\box7}
  \def\cfudot#1{\ifmmode\setbox7\hbox{$\accent"5E#1$}\else
  \setbox7\hbox{\accent"5E#1}\penalty 10000\relax\fi\raise 1\ht7
  \hbox{\raise.1ex\hbox to 1\wd7{\hss.\hss}}\penalty 10000 \hskip-1\wd7\penalty
  10000\box7} \def\polhk#1{\setbox0=\hbox{#1}{\ooalign{\hidewidth
  \lower1.5ex\hbox{`}\hidewidth\crcr\unhbox0}}}
  \def\polhk#1{\setbox0=\hbox{#1}{\ooalign{\hidewidth
  \lower1.5ex\hbox{`}\hidewidth\crcr\unhbox0}}}
  \def\polhk#1{\setbox0=\hbox{#1}{\ooalign{\hidewidth
  \lower1.5ex\hbox{`}\hidewidth\crcr\unhbox0}}}


\begin{thebibliography}{AMM81}

\bibitem[AHS78]{MR506229}
M.~F. Atiyah, N.~J. Hitchin, and I.~M. Singer.
\newblock Self-duality in four-dimensional {R}iemannian geometry.
\newblock {\em Proc. Roy. Soc. London Ser. A}, 362(1711):425--461, 1978.

\bibitem[AMM81]{MR606458}
Judith~M. Arms, Jerrold~E. Marsden, and Vincent Moncrief.
\newblock Symmetry and bifurcations of momentum mappings.
\newblock {\em Comm. Math. Phys.}, 78(4):455--478, 1980/81.

\bibitem[BK98]{MR1609624}
Sergey Barannikov and Maxim Kontsevich.
\newblock Frobenius manifolds and formality of {L}ie algebras of polyvector
  fields.
\newblock {\em Internat. Math. Res. Notices}, 4:201--215, 1998.

\bibitem[BL85]{MR782637}
Richard~E. Block and Pierre Leroux.
\newblock Generalized dual coalgebras of algebras, with applications to cofree
  coalgebras.
\newblock {\em J. Pure Appl. Algebra}, 36(1):15--21, 1985.

\bibitem[Bro65]{MR0220273}
R.~Brown.
\newblock The twisted {E}ilenberg-{Z}ilber theorem.
\newblock In {\em Simposio di {T}opologia ({M}essina, 1964)}, pages 33--37.
  Edizioni Oderisi, Gubbio, 1965.

\bibitem[CR87]{MR892316}
Charles~W. Curtis and Irving Reiner.
\newblock {\em Methods of representation theory. {V}ol. {II}}.
\newblock Pure and Applied Mathematics (New York). John Wiley \& Sons, Inc.,
  New York, 1987.
\newblock With applications to finite groups and orders, A Wiley-Interscience
  Publication.

\bibitem[Dol72]{MR0301078}
Albrecht Dold.
\newblock {$K$}-theory of non-additive functors of finite degree.
\newblock {\em Math. Ann.}, 196:177--197, 1972.

\bibitem[EML53]{MR0056295}
Samuel Eilenberg and Saunders Mac~Lane.
\newblock On the groups of {$H(\Pi,n)$}. {I}.
\newblock {\em Ann. of Math. (2)}, 58:55--106, 1953.

\bibitem[EML54]{MR0065162}
Samuel Eilenberg and Saunders Mac~Lane.
\newblock On the groups {$H(\Pi,n)$}. {II}. {M}ethods of computation.
\newblock {\em Ann. of Math. (2)}, 60:49--139, 1954.

\bibitem[GF70]{MR0266195}
I.~M. Gel{\cprime f}and and D.~B. Fuks.
\newblock Cohomologies of the {L}ie algebra of formal vector fields.
\newblock {\em Izv. Akad. Nauk SSSR Ser. Mat.}, 34:322--337, 1970.
\newblock Engl. transl.: Math. USSR-Izv. 4:327--342, 1970.

\bibitem[GK71]{MR0287566}
I.~M. Gel{\cprime f}and and D.~A. Ka{\v z}dan.
\newblock Certain questions of differential geometry and the computation of the
  cohomologies of the {L}ie algebras of vector fields.
\newblock {\em Dokl. Akad. Nauk SSSR}, 200:269--272, 1971.
\newblock Engl. transl.: Soviet. Math. Dokl. 12:1367--1370, 1971.

\bibitem[GM90a]{MR1065894}
William~M. Goldman and John~J. Millson.
\newblock Differential graded {L}ie algebras and singularities of level sets of
  momentum mappings.
\newblock {\em Comm. Math. Phys.}, 131(3):495--515, 1990.

\bibitem[GM90b]{MR1046568}
William~M. Goldman and John~J. Millson.
\newblock The homotopy invariance of the {K}uranishi space.
\newblock {\em Illinois J. Math.}, 34(2):337--367, 1990.

\bibitem[GR84]{MR755331}
Hans Grauert and Reinhold Remmert.
\newblock {\em Coherent analytic sheaves}, volume 265 of {\em Grundlehren der
  Mathematischen Wissenschaften [Fundamental Principles of Mathematical
  Sciences]}.
\newblock Springer-Verlag, Berlin, 1984.

\bibitem[Gro61]{MR0217085}
A.~Grothendieck.
\newblock {\'E}l\'ements de g\'eom\'etrie alg\'ebrique. {III}. {{\'E}}tude
  cohomologique des faisceaux coh\'erents. {I}.
\newblock {\em Inst. Hautes \'Etudes Sci. Publ. Math.}, (11):167, 1961.

\bibitem[Gro95]{MR1603467}
Alexander Grothendieck.
\newblock G\'eom\'etrie formelle et g\'eom\'etrie alg\'ebrique.
\newblock In {\em S\'eminaire {B}ourbaki, {V}ol.\ 5, Exp.\ No.\ 182}, pages
  193--220, errata p.\ 390. Soc. Math. France, Paris, 1995.

\bibitem[Gug72]{MR0301736}
V.~K. A.~M. Gugenheim.
\newblock On the chain-complex of a fibration.
\newblock {\em Illinois J. Math.}, 16:398--414, 1972.

\bibitem[Gug82]{MR662761}
V.~K. A.~M. Gugenheim.
\newblock On a perturbation theory for the homology of the loop-space.
\newblock {\em J. Pure Appl. Algebra}, 25(2):197--205, 1982.

\bibitem[Har77]{hartsboo}
Robin Hartshorne.
\newblock {\em Algebraic {G}eometry}.
\newblock Springer-Verlag, New York, 1977.
\newblock Graduate Texts in Mathematics, No. 52.

\bibitem[Haz03]{MR1992043}
Michiel Hazewinkel.
\newblock Cofree coalgebras and multivariable recursiveness.
\newblock {\em J. Pure Appl. Algebra}, 183(1-3):61--103, 2003.

\bibitem[HG88]{MR981617}
Michiel Hazewinkel and Murray Gerstenhaber, editors.
\newblock {\em Deformation theory of algebras and structures and applications},
  volume 247 of {\em NATO Advanced Science Institutes Series C: Mathematical
  and Physical Sciences}.
\newblock Kluwer Academic Publishers Group, Dordrecht, 1988.
\newblock Papers from the NATO Advanced Study Institute held in Il Ciocco, June
  1--14, 1986.

\bibitem[HK91]{MR1109665}
Johannes Huebschmann and Tornike Kadeishvili.
\newblock Small models for chain algebras.
\newblock {\em Math. Z.}, 207(2):245--280, 1991.

\bibitem[HS02]{MR1932522}
Johannes Huebschmann and Jim Stasheff.
\newblock Formal solution of the master equation via {HPT} and deformation
  theory.
\newblock {\em Forum Math.}, 14(6):847--868, 2002.
\newblock {\tt Math.AG/9906036}.

\bibitem[Hue95]{MR1363857}
Johannes Huebschmann.
\newblock The singularities of {Y}ang-{M}ills connections for bundles on a
  surface. {I}. {T}he local model.
\newblock {\em Math. Z.}, 220(4):595--609, 1995.
\newblock {\tt DG-GA/9411006}.

\bibitem[Hue96a]{MR1376296}
Johannes Huebschmann.
\newblock Poisson geometry of flat connections for {${\rm SU}(2)$}-bundles on
  surfaces.
\newblock {\em Math. Z.}, 221(2):243--259, 1996.
\newblock {\tt HEP-TH/9312113}.

\bibitem[Hue96b]{MR1369463}
Johannes Huebschmann.
\newblock The singularities of {Y}ang-{M}ills connections for bundles on a
  surface. {II}. {T}he stratification.
\newblock {\em Math. Z.}, 221(1):83--92, 1996.
\newblock {\tt DG-GA/9411007}.

\bibitem[Hue98]{twilled}
Johannes Huebschmann.
\newblock Twilled {L}ie-{R}inehart algebras and differential
  {B}atalin-{V}ilkovisky algebras.
\newblock 1998.
\newblock {\tt Math.dg/9811069}.

\bibitem[Hue99]{MR1710565}
J.~Huebschmann.
\newblock Berikashvili's functor {$D$} and the deformation equation.
\newblock {\em Proc. A. Razmadze Math. Inst.}, 119:59--72, 1999.
\newblock {\tt Math.AT/9906032}.

\bibitem[Hue00]{MR1764437}
Johannes Huebschmann.
\newblock Differential {B}atalin-{V}ilkovisky algebras arising from twilled
  {L}ie-{R}inehart algebras.
\newblock In {\em Poisson geometry ({W}arsaw, 1998)}, volume~51 of {\em Banach
  Center Publ.}, pages 87--102. Polish Acad. Sci. Inst. Math., Warsaw, 2000.

\bibitem[Hue10]{MR2640649}
Johannes Huebschmann.
\newblock On the construction of {$A_\infty$}-structures.
\newblock {\em Georgian Math. J.}, 17(1):161--202, 2010.
\newblock {\tt Arxiv:0809.4791 [math.AG]}.

\bibitem[Hue11a]{MR2762544}
Johannes Huebschmann.
\newblock The {L}ie algebra perturbation lemma.
\newblock In {\em Higher structures in geometry and physics}, volume 287 of
  {\em Progr. Math.}, pages 159--179. Birkh\"auser/Springer, New York, 2011.
\newblock {\tt Arxiv:0708.3977 [math.AG]}.

\bibitem[Hue11b]{MR2762538}
Johannes Huebschmann.
\newblock Origins and breadth of the theory of higher homotopies.
\newblock In {\em Higher structures in geometry and physics}, volume 287 of
  {\em Progr. Math.}, pages 25--38. Birkh\"auser/Springer, New York, 2011.
\newblock {\tt Arxiv:0710.2645 [math.AT]}.

\bibitem[Hue11c]{MR2820385}
Johannes Huebschmann.
\newblock The sh-{L}ie algebra perturbation lemma.
\newblock {\em Forum Math.}, 23(4):669--691, 2011.
\newblock {\tt Arxiv:0710.2070 [math.AG]}.

\bibitem[KNS58]{MR0112157}
K.~Kodaira, L.~Nirenberg, and D.~C. Spencer.
\newblock On the existence of deformations of complex analytic structures.
\newblock {\em Ann. of Math. (2)}, 68:450--459, 1958.

\bibitem[KS58]{MR0112154}
K.~Kodaira and D.~C. Spencer.
\newblock On deformations of complex analytic structures. {I}, {II}.
\newblock {\em Ann. of Math. (2)}, 67:328--466, 1958.

\bibitem[Kon03]{MR2062626}
Maxim Kontsevich.
\newblock Deformation quantization of {P}oisson manifolds.
\newblock {\em Lett. Math. Phys.}, 66(3):157--216, 2003.

\bibitem[Kun13]{MR2977456}
Ernst Kunz.
\newblock {\em Introduction to commutative algebra and algebraic geometry}.
\newblock Modern Birkh\"auser Classics. Birkh\"auser/Springer, New York, 2013.
\newblock Translated from the 1980 German original [MR0562105] by Michael
  Ackerman, With a preface by David Mumford, Reprint of the 1985 edition
  [MR0789602].

\bibitem[Kur62]{MR0141139}
M.~Kuranishi.
\newblock On the locally complete families of complex analytic structures.
\newblock {\em Ann. of Math. (2)}, 75:536--577, 1962.

\bibitem[Kur71]{MR0355111}
Masatake Kuranishi.
\newblock {\em Deformations of compact complex manifolds}.
\newblock Les Presses de l'Universit\'e de Montr\'eal, Montreal, Que., 1971.
\newblock S{\'e}minaire de Math{\'e}matiques Sup{\'e}rieures, No. 39
  ({\'E}t{\'e} 1969).

\bibitem[Mac67]{maclaboo}
Saunders MacLane.
\newblock {\em Homology}.
\newblock Springer-Verlag, Berlin, first edition, 1967.
\newblock Die Grundlehren der mathematischen Wissenschaften, Band 114.

\bibitem[Mic80]{MR594993}
Walter Michaelis.
\newblock Lie coalgebras.
\newblock {\em Adv. in Math.}, 38(1):1--54, 1980.

\bibitem[Mic03]{MR2035107}
Walter Michaelis.
\newblock Coassociative coalgebras.
\newblock In {\em Handbook of algebra, {V}ol. 3}, volume~3 of {\em Handb.
  Algebr.}, pages 587--788. Elsevier/North-Holland, Amsterdam, 2003.

\bibitem[Nar66]{MR0217337}
Raghavan Narasimhan.
\newblock {\em Introduction to the theory of analytic spaces}.
\newblock Lecture Notes in Mathematics, No. 25. Springer-Verlag, Berlin-New
  York, 1966.

\bibitem[NR66]{MR0195995}
Albert Nijenhuis and R.~W. Richardson, Jr.
\newblock Cohomology and deformations in graded {L}ie algebras.
\newblock {\em Bull. Amer. Math. Soc.}, 72:1--29, 1966.

\bibitem[SS12]{schlstas}
M.~Schlessinger and J.~Stasheff.
\newblock Deformation theory and rational homotopy type.
\newblock {\em {\rm Preprint, new version}}, 2012.
\newblock {\tt Arxiv:1211.1647}.

\bibitem[Sta78]{MR517083}
James Stasheff.
\newblock Rational homotopy-obstruction and perturbation theory.
\newblock In {\em Algebraic topology ({P}roc. {C}onf., {U}niv. {B}ritish
  {C}olumbia, {V}ancouver, {B}.{C}., 1977)}, volume 673 of {\em Lecture Notes
  in Math.}, pages 7--31. Springer, Berlin, 1978.

\bibitem[Swe69]{MR0252485}
Moss~E. Sweedler.
\newblock {\em Hopf algebras}.
\newblock Mathematics Lecture Note Series. W. A. Benjamin, Inc., New York,
  1969.

\end{thebibliography}
\end{document}